\numberwithin{equation}{section}
\renewcommand\d{\partial}
\renewcommand\a{\alpha}
\renewcommand\o{\omega}
\newcommand\s{\sigma}
\renewcommand\t{\tau}
\def\t{\tau}
\def\O{\Omega}
\def\ess{{\rm ess}}
\def\epsilon{\varepsilon}
\def\e{\varepsilon}
\newcommand\br{\begin{rem}}
\newcommand\er{\end{rem}}
\newcommand\bp{\begin{pmatrix}}
\newcommand\ep{\end{pmatrix}}
\newcommand\be{\begin{equation}}
\newcommand\ee{\end{equation}}
\newcommand\ba{\begin{equation}\begin{aligned}}
\newcommand\ea{\end{aligned}\end{equation}}
\newcommand\nn{\nonumber}
\newcommand{\uu}{{\mathbf u}}
\newcommand{\vv}{{\mathbf v}}
\newcommand{\ff}{{\mathbf f}}
\newcommand{\ww}{{\mathbf w}}
\newcommand{\vh}{{\mathbf h}}
\newcommand{\vr}{\varrho}
\newcommand{\vu}{\vc{u}}
\newcommand{\vf}{\vc{f}}
\newcommand{\vc}[1]{{\bf #1}}
\newcommand{\dive}{{\rm div\,}}
\newtheorem{defi}{Definition}[section]
\newtheorem{theorem}[defi]{Theorem}
\newtheorem{proposition}[defi]{Proposition}
\newtheorem{lemma}[defi]{Lemma}
\newtheorem{remark}[defi]{Remark}
\numberwithin{equation}{section}
\renewcommand\@biblabel[1]{#1.}
\newlength{\bibitemsep}\setlength{\bibitemsep}{.2\baselineskip plus .05\baselineskip minus .05\baselineskip}
\newlength{\bibparskip}\setlength{\bibparskip}{0pt}
\let\oldthebibliography\thebibliography
\renewcommand\thebibliography[1]{%
  \oldthebibliography{#1}%
  \setlength{\parskip}{\bibitemsep}%
  \setlength{\itemsep}{\bibparskip}%
}
\begin{document}

\title{Homogenization of Non-homogeneous Incompressible Navier-Stokes System in Critically Perforated Domains}

\author{Jiaojiao Pan\footnote{School of Mathematics, Nanjing University, 22 Hankou Road, Gulou District, Nanjing 210093, China, {\tt panjiaojiao@smail.nju.edu.cn.}}}

\date{}

\maketitle

\noindent
{\bf Abstract.} In this paper, we study the homogenization of 3D non-homogeneous  incompressible Navier-Stokes system in perforated domains with holes of critical size. The diameter of the holes is of size $\e^{3}$, where $\e$ is a small parameter measuring the mutual distance between the holes. We show that when $\e\to 0$, the velocity and density converge to a solution of the non-homogeneous incompressible Navier-Stokes system with a friction term of Brinkman type.

\medskip
\noindent
{\bf Mathematics Subject Classification.} 35B27, 76M50, 76N06.

\medskip
\noindent
{\bf Keywords.} Homogenization, Brinkman's law, Navier-Stokes system, Perforated domains.

\renewcommand{\refname}{REFERENCES}

\section{Introduction}
\label{Introduction}
Homogenization of different fluid flows in physical domains has been widely considered, where the domains are perforated by a large number of tiny holes. When the number of holes tends to infinity and their size tends to zero, the typical diameter and mutual distance of these holes become the main factors in the asymptotic behavior of fluid flows. The limit system that describes the limit behavior of fluid flows is called homogenized system defined in homogeneous domains without holes.

Tartar \cite{Tartar1} firstly considered the case where the holes’ mutual distance is of the same order as their radius for Stokes equations and derived Darcy's law in the homogenization limit. Allaire \cite{ALL-NS1, ALL-NS2} provided a systematical study of stationary Stokes and Navier-Stokes systems, where the porous medium is modeled as the periodic repetition of an elementary cell of size $\e$, in which the hole is of size $a_\e$. The homogenized system is determined by the ratio $\s_{\e}$ given as
\begin{equation}\label{1-sigma}
\sigma _\varepsilon: = \Big(\frac{\varepsilon^d}{a_\varepsilon^{d-2}}\Big)^{\frac{1}{2}},  \ d \geqslant 3;\quad{\sigma _\varepsilon}: = \varepsilon \left| \log \frac{{a_\varepsilon }}{\varepsilon} \right|^{\frac{1}{2}}, \ d = 2.\nn
\end{equation}
Roughly speaking, if $\lim_{\e\to 0}\s_{\e}=0$ corresponding to the case of supercritical size of holes, the asymptotic limit behavior is governed by Darcy's law. If $\lim_{\e\to 0}\s_{\e}=\infty$ corresponding to the case of subcritical size of holes, the limit system coincides with the original system.  If $\lim_{\e\to 0}\s_{\e}=\s_{*}\in (0,+\infty)$ corresponding to critical size of holes, the asymptotic limit behavior gives rise to Brinkman's law, which is a combination of Darcy's law and the original equations. Later, Lu \cite{L} gave a unified approach for Stokes equations in perforated domains. Such an idea of unified approach was also used in Jing \cite{Wenjia Jing,Wenjia Jing1} for the study of Laplace equations and Lamé systems in perforated domains.

For evolutionary incompressible Navier-Stokes system, Mikeli\'{c} \cite{Mik} studied the homogenization process where the size of holes is proportional to the mutual distance of holes and the result of homogenization is Darcy’s law. Feireisl, Namlyeyeva and Ne\v casov\' a \cite{FeNaNe} studied the critical case and derived Brinkman’s law. Recently, Lu and Yang \cite{LY} considered supercritical and subcritical size of holes. 

For compressible Navier-Stokes system,  Masmoudi \cite{Mas-Hom} studied the homogenization in the case where the size of holes is proportional to the mutual distance of holes and derived Darcy’s law in the limit. For the case of very small holes, there are series of studies \cite{BO2,DFL,FL1,Lu-Schwarz18}, which show the limit system does not change. Still for very small holes, two-dimensional stationary compressible Navier-Stokes equations, Ne\v casov\' a and Pan \cite{NP} showed that the limit system is the same as the original one. Similar results were given by Ne\v casov\' a and Oschmann \cite{NF} for evolutionary case.  In \cite{Richard M. Hofer1,BO1}, assuming that the Mach number decreases fast enough, Darcy’s law and Brinkman’s law are derived in the limit for supercritical case and critical case respectively.

Moreover, Feireisl, Novotn\'y and Takahashi \cite{FNT-Hom} studied homogenization of the full Navier-Stokes-Fourier system, where the diameter of the holes is proportional to their mutual distance and the result of homogenization is Darcy’s law. Lu and Pokorn\'y \cite{Lu-Pokorny} studied stationary compressible Navier-Stokes-Fourier system with subcritical size of holes, where the limit system remains the same as original one. Feireisl, Lu and Sun \cite{FLS} studied a non-homogeneous incompressible and heat conducting fluid confined to a 3D domain perforated by tiny holes of critical size, and showed that the limit system contains a friction term of Brinkman type.

Recently, Lu and Qian \cite{LY-Qian} considered the homogenization of evolutionary incompressible viscous non-Newtonian flows of Carreau-Yasuda type in porous media, where the size of holes is proportional to the mutual distance of holes. The result of homogenization is Darcy’s law. Then Lu and Oschmann \cite{Lu-Oschmann} generalized the results to the cases where the mutual distance between the holes is measured by a small parameter $\e>0$ and the size of the holes is $\e^{\a}$, $\a\in (1,\frac{3}{2})$. Moreover, H\"{o}fer \cite{Richard M. Hofer} studied the homogenization of the Navier-Stokes equations in perforated domains in the inviscid limit with different sizes of the holes. It is more interesting and more difficult to derive the convergence rate from the original system to the limit system. Allaire \cite{ALL-NS1,ALL-NS2} gave convergence rates for Stokes equations for different sizes of the holes. Very recently, Shen \cite{Shen} gave the sharp convergence rate for Stokes equations to Darcy’s Law.

In this paper, we consider the homogenization of 3D non-homogeneous incompressible Navier-Stokes system, where  the diameter of holes is of size $\e^{3}$ and the mutual distance of holes is proportional to $\e$ (critical case). We show that the limit system in momentum equation contains a term of Brinkman type given by viscosity and perforation properties. The limit continuity equation is the same as the original one. 
 
\subsection{Problem formulation}
\label{Problem formulation}
Let $\Omega \subset \mathbb R^{3}$ be a bounded domain of class $C^2$. We consider a family of $\e$-dependent perforated domains
$\{ \Omega_\e \}_{\e > 0}$,
\be\label{1.1}
\Omega_\e = \Omega \setminus \bigcup_{k\in K_\e} T_{\e, k},\ \ K_\e:=\{k\in \mathbb{Z}^3|\ \e \overline{C}_k\subset\O\},
\ee
where the sets $T_{\e,k}$ represent holes and 
 \be
C_k := (-\frac{1}{2},\frac{1}{2})^3+k,\ \ k\in \mathbb{Z}^3.\nn
\ee
Suppose we have the following property concerning the distribution of the holes:
 \be\label{1.2}
B(x_{\e,k}, \frac{1}{2}a_{\e} ) \subset \subset T_{\e,k} := x_{\e,k} + \ a_{\e} \overline{U}_{k,\e}  \subset\subset B(x_{\e,k}, \frac{3}{4}a_{\e} ) \subset \subset\e C_k\subset\subset\O.
\ee
Here $B(x,r)$ denotes the open ball centered at $x$ with radius $r$ in $\mathbb R^{3}$. $x_{\e,k}\in \mathbb R^{3}$ represent the locations of holes, $a_{\e}$ is the size of the holes and $\{U_{k,\e}\}_{\e>0,\; k\in K_\e }$ are uniformly $C^{2+\nu}$ simply connected domains satisfying 
\be
\left\{|x|<\frac{1}{2}\right\}\subset U_{k,\e}\subset \overline{U}_{k,\e}\subset \left\{|x|<\frac{3}{4}\right\}, \quad \textup{for any }\e, k.\nn
\ee   

We consider the case $a_{\e}=\e^{3}$, which means $\lim_{\e\to 0}\s_{\e}=1$ (critical case). The total number of the holes $|K_\e|$ can be estimated as
\be
|K_{\e}| \leq  \frac{|\Omega|}{\e^3}(1+o(1)).\nn
\ee

Consider non-homogeneous incompressible Navier-Stokes system in $\Omega_{\varepsilon}$:
\begin{eqnarray}\label{1.3}
\left\{
\begin{aligned}
&\d_{t}\vr_{\e}+\dive (\vr_{\e} \vu_{\e}) = 0, \quad &\textup{in }&(0,T)\times\Omega_{\varepsilon},\\
&\d_{t}(\vr_{\e} \vu_{\e})+ \dive (\vr_{\e} \vu_{\e} \otimes \vu_{\e})-\mu \Delta \vu_{\e} +\nabla p_{\e} =\vr_{\e} \vf_{\e}, \quad & \textup{in }&(0,T)\times\Omega_{\varepsilon},\\
&\dive \vu_{\e} = 0, \quad &\textup{in }&(0,T)\times\Omega_{\varepsilon},\\
&\vu_{\e} = 0, \quad &\textup{on }&(0,T)\times\d\Omega_{\varepsilon},\\
&\vr_{\e}|_{t=0} = \vr_{\e}^{0},\quad (\vr_{\e}\vu_{\e})|_{t=0} = \vr_{\e}^{0}\vu_{\e}^{0},
\end{aligned}
\right .
\end{eqnarray}
where $\vr_{\e}$ is the fluid density, $\vu_{\e}$ is the velocity field in $\mathbb R^{3}$, $p_{\e}$ is the pressure, $\mu$ is the viscosity coefficient and is assumed to be a positive constant. The initial data of $(\vr_{\e},\vu_{\e})$ in \eqref{1.3} satisfy 
\begin{eqnarray}\label{1.4}
\left\{
\begin{aligned}
&\vr_{\e}^{0}\in L^{\infty}(\mathbb R^{3}),\quad \vr_{\e}^{0}(x)=\vr_{s}>0 \quad \textup{in } \mathbb R^{3}\backslash\overline{\O}_{\e}, \quad\textup{for some positive constant }\vr_{s},\\
&0<\underline\vr\leq\vr_{\e}^{0}(x)\leq\overline\vr,\quad \vr_{\e}^{0}\rightarrow \vr_{0} \quad \textup{strongly in }L^{1}({\Omega}),\\
&\dive \vu_{\e}^{0} = 0 \quad \textup{in }\O_{\e},\quad \vu_{\e}^{0} = 0\quad \textup{in }\mathbb R^{3}\backslash\overline{\O}_{\e},\quad \vu_{\e}^{0}\rightarrow \vu_{0} \quad \textup{strongly in }L^{2}(\Omega;\mathbb R^{3}).
\end{aligned}
\right .
\end{eqnarray}
The external force $\{\vf_{\e}\}_{\e>0}$ are bounded in $L^{2}((0,T)\times\O;\mathbb R^{3})$. Throughout this paper, we assume $\vu_{\e}$ are extended to be zero outside $\O_{\e}$. Therefore, we can assume that the first equation in \eqref{1.3} (the continuity equation) holds in $\mathbb R^{3}$, as long as $\vr_\e(t,x)=\vr_s$ for $x\in \mathbb R^{3}\backslash\overline{\O}_{\e}$.

\begin{remark}We denote $\dive$,  $\nabla$ and  $\Delta$ as the divergence, the gradient and Laplace operator related to spatial variable $x$ respectively throughout this paper. Moreover, we use $C$ to denote a universal positive constant independent of $\e$.
\end{remark}

\subsection{Finite energy weak solution and renormalized weak solution}

\begin{defi}\label{finite energy weak solution}We call $(\vr_{\e},\vu_{\e})$ a \textbf{weak solution} to system \eqref{1.3}-\eqref{1.4} if:
\begin{itemize}

\item  $(\vr_{\e},\vu_{\e})$ satisfy:
\begin{eqnarray}\label{1.5}
\left\{
\begin{aligned}
&\vr_{\e}\in C([0,T],L^{1}({\Omega_{\varepsilon}})), \\
&\vu_{\e}\in  L^{\infty}(0,T;L^{2}({\Omega_{\varepsilon}};\mathbb R^{3}))\cap L^{2}(0,T;W_{0}^{1,2}({\Omega_{\varepsilon}};\mathbb R^{3})).
\end{aligned}
\right .
\end{eqnarray}

\item for any $\psi\in C_{c}^{1}([0,T)\times {\mathbb R^{3}})$, 
\be\label{1.6}
\int_{0}^{T}\int_{\mathbb R^{3}}\vr_{\e}(\d_{t}\psi+\vu_{\e}\cdot\nabla\psi)dxdt=-\int_{\mathbb R^{3}}\vr_{\e}^{0}\psi(0,x)dx,
\ee
where $\vu_{\e}\equiv 0$ outside $\O_{\e}$.
 \item for any  $\bm{\varphi}\in C_{c}^{1}([0,T)\times \O_{\e};\mathbb R^{3})$, $\dive\bm\varphi =0$, 
\be\label{1.7}
\int_{0}^{T}\int_{\Omega_{\varepsilon}}-\vr_{\e}\vu_{\e}\cdot\d_{t}\bm\varphi-(\vr_{\e} \vu_{\e} \otimes \vu_{\e}):\nabla\bm\varphi+\mu\nabla \vu_{\e}:\nabla \bm\varphi dxdt=\int_{0}^{T}\int_{\Omega_{\varepsilon}}\vr_{\e}\vf_{\e}\cdot \bm\varphi dxdt+\int_{\Omega_{\varepsilon}}\vr_{\e}^{0} \vu_{\e}^{0}\cdot\bm\varphi(0,x)dx.
\ee
\end{itemize}
   
Furthermore, we call $(\vr_{\e},\vu_{\e})$ a \textbf{finite energy weak solution} if the energy inequality
\be\label{1.8}
\frac{1}{2}\int_{\Omega_{\varepsilon}}\vr_{\e} |\vu_{\e}|^{2}(\t,x)dx+\mu\int_{0}^{\t}\int_{\Omega_{\varepsilon}}|\nabla \vu_{\e}|^{2}dxdt\leq \frac{1}{2}\int_{\Omega_{\varepsilon}}\vr_{\e}^{0} |\vu_{\e}^{0}|^{2}(x)dx+\int_{0}^{\t}\int_{\Omega_{\varepsilon}}\vr_{\e}\vf_{\e}\cdot \vu_{\e} dxdt
\ee
holds for \textup{a.a.} $\t\in(0,T)$.

\smallskip

Moreover, a finite energy weak solution $(\vr_{\e},\vu_{\e})$ is called a \textbf{renormalized weak solution} if
\be\label{1.9}
\int_{0}^{T}\int_{\mathbb R^{3}}b(\vr_{\e})\d_{t}\psi+b(\vr_{\e})\vu_{\e}\cdot\nabla\psi dxdt=-\int_{\mathbb R^{3}}b(\vr_{\e}^{0})\psi(0,\cdot)dx
\ee
for any $b\in C^{1}([0,\infty))$ and any $\psi\in C_{c}^{1}([0,T)\times \mathbb R^{3})$. 
\end{defi}

For any fixed $\e>0$, the global existences of the finite energy weak solution and the renormalized weak solution are known in \cite{DiP-L,Lions-C,Lions-D}. 
 \subsection{Main result}

Our approach is based on Stokes' capacity, which is analogous to Newtonian capacity used in the homogenization of elliptic equations. For a compact set $Q\subset B(0,1)$, we introduce 
\be\label{1.10}
C_{j,l}(Q)=\int_{B(0,1)\backslash Q}\nabla \vv^{j}:\nabla \vv^{l}dx,
\ee
where $\vv^{i}$ is the unique solution to the model problem
\begin{eqnarray}\label{1.11}
\left\{
\begin{aligned}
&-\Delta \vv^{i}+\nabla q^{i}=0,\quad \dive \vv^{i}=0 \quad&\mbox{in }&B(0,1)\backslash Q,\\
&\vv^{i}|_{\d Q}=\mathbf e^{i},\quad \vv^{i}|_{\d B(0,1)}=0.
\end{aligned}
\right .
\end{eqnarray}
Here $\{\bm e^{i}\}_{i=1,2,3}$ is the canonical basis of the space $\mathbb R^{3}$. Moreover, we assume
\be
\int_{B(0,1)}q^{i}dx=0.\nn
\ee

We suppose there exists a positive definite symmetric matrix 
\be\label{1.12}
\mathbb C=\{D_{j,l}\}_{j,l=1}^{3},\quad \mathbb C\in L^{\infty}(\O;\mathbb R_{\textup{sym}}^{3\times3}).
\ee
such that at least for a suitable subsequence 
\be
\lim_{\e\rightarrow 0}\sum_{T_{\e,k}\subset B}  C_{j,l}(T_{\e,k})=\int_{B} D_{j,l}(x)dx\nn
\ee
for any Borel set $B\subset \O$. 

 It is shown that the matrix $\mathbb C$ is a constant matrix in the case of periodically distributed holes of identical (rescaled) shape, see Allaire \cite{ALL-NS1}. Now we are ready to give the main result:
\begin{theorem}\label{Theorem}Let $\{\O_{\e}\}_{\e>0}$ be perforated domains specified in \eqref{1.1}-\eqref{1.2}. Suppose
\ba\label{1.13}
\vf_{\e} \rightarrow \vf \quad \mbox{weakly in } L^{2}((0,T)\times\O;\mathbb R^{3}).
\ea
Let $(\vr_{\e},\vu_{\e})$ be  a family of finite energy weak solutions of problem \eqref{1.3} with initial data given in \eqref{1.4}. Then, up to a subsequence, we have 
\ba\label{1.14}
\vu_{\e}\rightarrow \vu \quad\mbox{ weakly-(*) in } L^{\infty}(0,T;L^{2}({\Omega}))\mbox{ and weakly in } L^{2}(0,T;W_{0}^{1,2}({\Omega})),
\ea
\be\label{1.15}
 \vr_{\e}\rightarrow \vr \quad \mbox{in } C([0,T];L^{1}(\O)) \quad \mbox{and} \quad 0<\underline\vr\leq\vr_{\e}\leq\overline\vr.
\ee
Moreover, $(\vr, \vu)$ is a weak solution to the problem
\begin{eqnarray}\label{1.16}
\left\{
\begin{aligned}
&\d_{t}\vr+\dive (\vr \vu) = 0, \quad &\textup{in }&(0,T)\times\Omega,\\
&\d_{t}(\vr \vu)+ \dive (\vr \vu \otimes \vu)-\mu \Delta \vu +\mu\mathbb C \vu+\nabla p =\vr \vf, \quad & \textup{in }&(0,T)\times\Omega,\\
&\dive \vu = 0, \quad &\textup{in }&(0,T)\times\Omega,\\
&\vu = 0, \quad &\textup{on }&(0,T)\times\d\Omega,\\
&\vr|_{t=0} = \vr_{0},\quad (\vr\vu)|_{t=0} = \vr_{0}\vu_{0},
\end{aligned}
\right .
\end{eqnarray}
and satisfies 
\be
0<\underline\vr\leq\vr(x)\leq\overline\vr,\quad \textup{a.a. }(t,x),\nn
\ee
where the initial data $(\vr_{0}, \vu_{0})$ satisfy \eqref{1.4}, $\mu$ is given in \eqref{1.3}.
\end{theorem}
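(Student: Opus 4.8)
The plan is to pass to the limit $\e\to 0$ in the weak formulation \eqref{1.6}--\eqref{1.8}, with the new friction term $\mu\mathbb C\vu$ produced by the interaction of the velocity with the critically sized holes and captured through the Stokes cell problem \eqref{1.11}. First I would collect the uniform bounds. Because the holes are small enough that the Poincar\'e inequality on $\O_\e$ holds with an $\e$-independent constant, the energy inequality \eqref{1.8} together with $\underline\vr\le\vr_\e\le\overline\vr$ and the $L^2$-boundedness of $\vf_\e$ yields, via Gronwall, that $\vu_\e$ is bounded in $L^\infty(0,T;L^2)\cap L^2(0,T;W_0^{1,2})$ uniformly in $\e$. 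Extending $\vu_\e$ by zero (legitimate since $\vu_\e$ vanishes on the holes, whose total volume is $O(\e^6)$), these become bounds on $\O$ giving the weak limits \eqref{1.14}. The bounds $\underline\vr\le\vr_\e\le\overline\vr$ follow from the renormalized continuity equation \eqref{1.9} with $\dive\vu_\e=0$, which moreover gives $\int_\O b(\vr_\e)(t)\,dx=\int_\O b(\vr_\e^0)\,dx$ for every $b$. From the momentum equation $\partial_t(\vr_\e\vu_\e)$ is bounded in a negative Sobolev space, so the Aubin--Lions argument applied to the Leray projection of $\vr_\e\vu_\e$, combined with the spatial $W^{1,2}$ bound, gives the crucial strong convergence $\vu_\e\to\vu$ in $L^2((0,T)\times\O;\mathbb R^3)$.

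Next I would establish \eqref{1.15}. Since $\vr_\e\vu_\e$ is bounded, $\partial_t\vr_\e$ is bounded in $L^2(0,T;W^{-1,2})$, so Aubin--Lions gives $\vr_\e\to\vr$ in $C([0,T];W^{-1,2})$ and, with the $L^\infty$ bound, weak-$(*)$ convergence. As $\vu_\e\to\vu$ strongly and $\vr_\e\rightharpoonup\vr$ weakly-$(*)$, the product converges, $\vr_\e\vu_\e\rightharpoonup\vr\vu$, and passing to the limit in \eqref{1.6} yields the limit continuity equation together with $\dive\vu=0$ and $\vu\in W_0^{1,2}$. Since $\vu\in L^2(W^{1,2})$ is divergence free, DiPerna--Lions theory ensures $\vr$ is renormalized, so $\int_\O b(\vr)(t)=\int_\O b(\vr_0)$. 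Taking $b(s)=s^2$ gives $\|\vr_\e(t)\|_{L^2}\to\|\vr(t)\|_{L^2}$, which upgrades the weak convergence to strong convergence in $C([0,T];L^2)$, hence in $C([0,T];L^1)$.

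The heart of the proof is the limit in the momentum equation \eqref{1.7}, whose test functions must vanish on the holes. Given a divergence-free $\bm\varphi\in C_c^1([0,T)\times\O;\mathbb R^3)$, I would build an oscillating test function $\bm\varphi_\e=\sum_{i=1}^3\varphi_i\,w_\e^i$, where $\varphi_i$ is the $i$-th component of $\bm\varphi$ and the corrector $w_\e^i$ is assembled from the cell solution $\vv^i$ of \eqref{1.11}: on each annulus $B(x_{\e,k},a_\e)\setminus T_{\e,k}$ set $w_\e^i(x)=\bm e^i-\vv^i\big((x-x_{\e,k})/a_\e\big)$, and $w_\e^i=0$ on the holes, $w_\e^i=\bm e^i$ elsewhere. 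By construction $w_\e^i$ is divergence free, vanishes on the holes, $w_\e^i\to\bm e^i$ strongly in $L^2$, while $\nabla w_\e^i$ stays bounded in $L^2$ but does not vanish, its energy concentrating near the holes and being measured by the capacities \eqref{1.10}. A Bogovskii correction on $\O_\e$ restores $\dive\bm\varphi_\e=0$ exactly, which is needed to eliminate the pressure $p_\e$ in \eqref{1.7}; since $\dive(\varphi_i w_\e^i)=\nabla\varphi_i\cdot w_\e^i\to\dive\bm\varphi=0$, this correction is negligible provided the Bogovskii operator on the perforated domain is bounded uniformly in $\e$.

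Plugging $\bm\varphi_\e$ into \eqref{1.7} and passing to the limit term by term, the time-derivative, convective and forcing terms converge by the strong convergences already obtained together with $\bm\varphi_\e\to\bm\varphi$ in $L^2$. The viscous term splits as $\sum_i\int\mu\nabla\vu_\e:(w_\e^i\otimes\nabla\varphi_i)$, which tends to $\int\mu\nabla\vu:\nabla\bm\varphi$ because $w_\e^i\to\bm e^i$ strongly against the weak limit of $\nabla\vu_\e$, plus the decisive term $\sum_i\int\mu\,\varphi_i\,\nabla\vu_\e:\nabla w_\e^i$. This last term is the main obstacle: it is a product of two only weakly convergent gradients and cannot be passed to the limit directly. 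I would resolve it by integrating by parts using the cell equation $-\Delta\vv^i+\nabla q^i=0$ to transfer the derivatives off the corrector, re-expressing the term through $\vu_\e$ itself (which converges strongly in $L^2$), the mean-zero pressures $q^i$, and the quadratic forms $C_{j,l}(T_{\e,k})$; assumption \eqref{1.12} then identifies the limit as $\mu\int\mathbb C\vu\cdot\bm\varphi$. The remaining points are bookkeeping: the correctors have mutually disjoint supports so cross-hole interactions vanish, the pressure contributions drop by mean-zero normalization and divergence-freeness, and the Bogovskii error is controlled as above. Collecting all limits reproduces the weak form of \eqref{1.16}, completing the proof.
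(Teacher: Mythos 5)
Your overall strategy (oscillating test functions built from the cell problem, inserted directly into the evolutionary momentum equation) is the classical Allaire/Cioranescu--Murat route, and it differs from the paper, which instead mollifies in time and reduces the momentum equation, at each fixed time, to a stationary Stokes problem in $\O_\e$ to which the quoted homogenization result (Proposition \ref{Stationary stokes problem}, from Feireisl--Namlyeyeva--Ne\v casov\'a) applies and delivers the Brinkman term. In principle your route can work, but as written it contains a fatal quantitative error: the corrector is built at the wrong scale. You let $w_\e^i$ drop from $\mathbf e^i$ to $0$ inside the annulus $B(x_{\e,k},a_\e)\setminus T_{\e,k}$, i.e.\ the screening happens at distance $a_\e$ from a hole of diameter $\sim a_\e$. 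The per-hole energy of that corrector is $a_\e\int_{B(0,1)\setminus U_{k,\e}}\nabla\vv^j:\nabla\vv^l\,dx$, the capacity of the \emph{unit-size} obstacle $U_{k,\e}$ screened at the unit sphere, multiplied by $a_\e$. This is \emph{not} the quantity $C_{j,l}(T_{\e,k})$ of \eqref{1.10}, which is the capacity of the \emph{tiny} obstacle $a_\e U_{k,\e}$ relative to $B(0,1)$ and hence, as $a_\e\to 0$, essentially the unscreened exterior capacity. Screening at a radius comparable to the obstacle inflates the capacity by a factor bounded away from $1$: already for Newtonian capacity of a ball of radius $\tfrac12 a_\e$, screening at radius $a_\e$ gives $4\pi a_\e$, while screening at radius $1$ gives $2\pi a_\e(1+O(a_\e))$. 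So the strange term your argument produces is $\mu\widetilde{\mathbb C}\vu$ with $\widetilde{\mathbb C}=\lim_{\e\to0}\sum_k a_\e C_{j,l}(U_{k,\e})\neq\mathbb C$; worse, assumption \eqref{1.12} does not even guarantee that this limit exists. The standard fix is to let the transition occur at a mesoscale much larger than $a_\e$ (e.g.\ solve the Stokes problem on $B(x_{\e,k},\e/2)\setminus T_{\e,k}$ with data $\mathbf e^i$ on $\d T_{\e,k}$ and $0$ on the outer sphere); then the per-hole energy equals $C_{j,l}(T_{\e,k})(1+o(1))$ and \eqref{1.12} identifies the limit correctly.

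A second gap is the time-compactness step. The claim ``$\d_t(\vr_\e\vu_\e)$ is bounded in a negative Sobolev space, so Aubin--Lions applied to the Leray projection of $\vr_\e\vu_\e$ gives $\vu_\e\to\vu$ strongly in $L^2$'' ignores that the weak formulation \eqref{1.7} admits only test functions vanishing on the holes. The Leray projection on $\O$ of a smooth field does not vanish there, so \eqref{1.7} yields no bound on the time derivative in any fixed, $\e$-independent negative space; this is exactly the obstruction that forces the paper to test with the Stokes correctors $\ww_\e\in W_0^{1,2}(\O_\e;\mathbb R^3)$ in Lemma \ref{Compactness in time}, obtaining equicontinuity of $t\mapsto\int_\O\vr_\e\vu_\e\cdot\ww_\e\,dx$ and from it strong convergence of $\sqrt{\vr_\e}\vu_\e$. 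Your correctors, once rebuilt at the right scale, could serve the same purpose, but the Aubin--Lions shortcut as stated does not go through. Finally, the Bogovskii correction you invoke is not free on critically perforated domains (uniform bounds are delicate there), whereas the paper sidesteps the pressure entirely by working throughout with divergence-free test fields vanishing on the holes.
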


\begin{remark}
The definition of weak solution to system \eqref{1.16}  is similar to Definition \ref{finite energy weak solution}.
\end{remark}

\section{Proof of Theorem \ref{Theorem}}
\subsection{Uniform estimates}
 Recall the classical Poincar\'e inequality: 
\begin{lemma}\label{Poincare inequality}
Let $\O_{\e}$ be the perforated domain defined by \eqref{1.1}-\eqref{1.2}. There exists a constant $C$ independent of $\e$ such that
\be
\|\vu\|_{L^{2}(\O_\e)}\leq C\|\nabla \vu\|_{L^{2}(\O_{\e})},\quad \mbox{for any } \vu\in W_{0}^{1,2}(\O_\e).\nn
\ee
\end{lemma}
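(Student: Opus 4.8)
The plan is to reduce the statement to the classical Poincar\'e inequality on the fixed bounded domain $\O$, exploiting the fact that functions in $W_0^{1,2}(\O_\e)$ vanish on the \emph{entire} boundary $\d\O_\e$, which includes the boundaries of all the holes $T_{\e,k}$. First I would extend an arbitrary $\vu\in W_0^{1,2}(\O_\e)$ by zero into the holes (and outside $\O$), producing a function $\tilde\vu$ on $\O$. Because $\vu$ has zero trace on each $\d T_{\e,k}$ as well as on $\d\O$, the zero-extension across these interfaces introduces no singular part in the distributional gradient, so $\tilde\vu\in W_0^{1,2}(\O)$. This is exactly the convention already adopted in the paper, where $\vu_\e$ is extended by zero outside $\O_\e$.

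Next, the extension changes neither the $L^2$ norm nor the $L^2$ norm of the gradient: since the filled-in region contributes nothing, one has $\|\tilde\vu\|_{L^2(\O)}=\|\vu\|_{L^2(\O_\e)}$ and $\|\nabla\tilde\vu\|_{L^2(\O)}=\|\nabla\vu\|_{L^2(\O_\e)}$. I would then apply the standard Poincar\'e inequality on the fixed bounded domain $\O$ to $\tilde\vu\in W_0^{1,2}(\O)$, obtaining $\|\tilde\vu\|_{L^2(\O)}\le C_\O\|\nabla\tilde\vu\|_{L^2(\O)}$. The constant $C_\O$ depends only on the geometry of $\O$ (for instance on its diameter) and is therefore completely independent of $\e$. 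Combining this estimate with the two norm identities yields the claim with $C=C_\O$.

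There is no genuine obstacle here: the only point requiring care is the justification that the zero-extension belongs to $W_0^{1,2}(\O)$, which is a routine consequence of the matching zero traces across the interfaces $\d T_{\e,k}$ together with the definition of $W_0^{1,2}$. The essential observation is that the perforation, no matter how fine, only \emph{removes} mass from the domain; hence the Poincar\'e inequality on the full domain $\O$ automatically controls the same quantities restricted to $\O_\e$, and crucially the controlling constant never degenerates as $\e\to 0$.
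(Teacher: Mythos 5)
Your proof is correct: the zero-extension argument (using that $W_0^{1,2}(\O_\e)$ functions extend by zero to $W_0^{1,2}(\O)$ with identical norms, then applying Poincar\'e on the fixed domain $\O$) is precisely the standard justification, and the paper itself offers no proof, simply labelling the lemma as the ``classical'' Poincar\'e inequality and relying on exactly this reasoning. The only cosmetic remark is that membership of the extension in $W_0^{1,2}(\O)$ is most cleanly seen via density of $C_c^\infty(\O_\e)$ in $W_0^{1,2}(\O_\e)$ (extended test functions lie in $C_c^\infty(\O)$ and the extension is an isometry), which avoids invoking trace theory altogether.
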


By Theorem II.3 of DiPerna-Lions \cite{DiP-L}, the finite energy weak solution $(\vr_\e,\uu_\e)$ of problem \eqref{1.3}-\eqref{1.4} is a renormalized weak solution. We take 
\be
b(\vr_{\e})=[\vr_{\e}-\overline\vr]^{+},\quad b(\vr_{\e})=-[\vr_{\e}-\underline\vr]^{-}\nn
\ee
as test functions in the renormalized equation \eqref{1.9} to deduce
\be\label{2.1}
0<\underline\vr\leq\vr_{\e}(x)\leq\overline\vr,\quad \textup{a.a. }(t,x) \mbox{ uniformly in } \e.
\ee

Recall the energy inequality \eqref{1.8}:
\ba
&\frac{1}{2}\int_{\Omega_{\varepsilon}}\vr_{\e} |\vu_{\e}(\t,x)|^{2}dx+\mu\int_{0}^{\t}\int_{\Omega_{\varepsilon}}|\nabla \vu_{\e}|^{2}dxdt\leq \frac{1}{2}\int_{\Omega_{\varepsilon}}\vr_{\e}^{0} |\vu_{\e}^{0}(x)|^{2}dx+\int_{0}^{\t}\int_{\Omega_{\varepsilon}}\vr_{\e}\vf_{\e}\cdot \vu_{\e} dxdt.\nn
\ea
By \eqref{2.1} and Lemma \ref{Poincare inequality}, we deduce
\ba
&\frac{1}{2}\underline\vr\int_{\Omega_{\varepsilon}}|\vu_{\e}(\t,x)|^{2}dx+\mu\int_{0}^{\t}\int_{\Omega_{\varepsilon}}|\nabla \vu_{\e}|^{2}dxdt\\
&\leq\frac{1}{2}\int_{\Omega_{\varepsilon}}\vr_{\e} |\vu_{\e}|^{2}(\t,x)dx+\mu\int_{0}^{\t}\int_{\Omega_{\varepsilon}}|\nabla \vu_{\e}|^{2}dxdt\\
&\leq\frac{1}{2}\overline\vr\|\vu_{\e}^{0}(x)\|^{2}_{L^{2}({\Omega_{\varepsilon}})}+\overline\vr\|\vu_{\e}\|_{L^{2}(0,\t;W_{0}^{1,2}(\O_\e;\mathbb R^{3}))}\cdot\|\vf_{\e}\|_{L^{2}(0,\t;W^{-1,2}(\O_{\e};\mathbb R^{3}))}\\
&\leq \frac{1}{2}\overline\vr\| \vu_{\e}^{0}(x)\|^{2}_{L^{2}({\Omega_{\varepsilon}})}+C\|\nabla\vu_{\e}\|_{L^{2}(0,\t;L^{2}(\O_\e;\mathbb R^{3\times 3}))}\cdot\|\vf_{\e}\|_{L^{2}(0,\t;W^{-1,2}(\O_{\e};\mathbb R^{3}))}\\
&\leq \frac{\mu}{2}\|\nabla\vu_{\e}\|_{L^{2}(0,\t;L^{2}(\O_\e;\mathbb R^{3\times 3}))}^{2}+C\|\vf_{\e}\|_{L^{2}(0,\t;W^{-1,2}(\O_{\e};\mathbb R^{3}))}^{2}+\frac{\overline\vr}{2}\|\vu_{\e}^{0}(x)\|^{2}_{L^{2}({\Omega_{\varepsilon}})}\nn
\ea
for \textup{a.a.} $\t\in [0,T]$. Since $\{\vf_{\e}\}_{\e>0}$ are bounded in ${L^{2}(0,T;W^{-1,2}(\O;\mathbb R^{3}))}$ and $\{\vu_{\e}^{0}\}_{\e>0}$ are uniformly bounded in $L^{2}(\Omega;\mathbb R^{3})$, we have
\ba
\ess \sup_{t\in [0,T]}\|\vu_{\e}\|_{L^{2}({\Omega_{\varepsilon}})}\leq C,\quad \|\nabla\vu_{\e}\|_{L^{2}(0,T;L^{2}({\Omega_{\varepsilon}}))}\leq C,\nn
\ea
with $\vu_{\e}\equiv 0$ outside $\O_{\e}$, which means (by Poincar\'e inequality)
\ba\label{2.2}
\|\vu_{\e}\|_{L^{\infty}(0,T;L^{2}({\Omega}))}\leq C,\quad \|\nabla\vu_{\e}\|_{L^{2}(0,T;L^{2}({\Omega}))}\leq C,\quad \|\vu_{\e}\|_{L^{2}(0,T;L^{2}({\Omega}))}\leq C.
\ea
Moreover, we have
\be\label{2.3}
\ess \sup_{t\in [0,T]}\int_{\Omega_{\varepsilon}}\vr_{\e} |\vu_{\e}|^{2}(t,x)dx\leq C.
\ee
By \eqref{2.2}, up to a subsequence, we have 
\be\label{2.4}
\vu_{\e}\rightarrow \vu \quad\mbox{ weakly-(*) in } L^{\infty}(0,T;L^{2}({\Omega}))\mbox{ and weakly in } L^{2}(0,T;W_{0}^{1,2}({\Omega})).
\ee

\subsection{Asymptotic limit in continuity equation}
From Lions \cite{Lions-C,Lions-D}, we have the following lemma. 
\begin{lemma}\label{DiPerna-Lions}Let $\vr^{n},\vu^{n}$ satisfy $\vr^{n}\in C([0,T];L^{1}(B_{R}))(\forall ~R>0)$, $\vr^{n}\geq 0$, $\vu^{n}\in L^{2}(0,T;W_0^{1,2}(\O;\mathbb R^{3}))$, where $T\in (0,\infty)$ is fixed. We define $\vr_{0}^{n}=\vr^{n}(0)$ and we assume 
\be
0\leq \vr^{n}\leq C \quad \textup{a.a. in }(0,T)\times\O,\nn
\ee
\be
\dive \vu^{n}=0 \quad \textup{a.a. in } (0,T)\times\O,\quad \|\vu^{n}\|_{L^{2}(0,T;W_{0}^{1,2}(\O))}\leq C,\nn
\ee
\be
\d_{t}\vr^{n}+\dive (\vr^{n} \vu^{n}) = 0 \quad \mbox{in } \mathcal D^{'}( (0,T)\times\mathbb R^{3}),\nn
\ee
\be
\vr_{0}^{n}\rightarrow \vr_{0}\quad \mbox{in } L^{1}(\O),\quad  \vu^{n}\rightarrow \vu \quad \mbox{weakly in } L^{2}(0,T;W_{0}^{1,2}(\O;\mathbb R^{3})),\nn
\ee
for some $\vr_{0}$ satisfying $0\leq \vr_{0}\leq C$. Then we have $\vr^{n}$ converge in $C([0,T];L^{p}(\O))$ for all $1\leq p<\infty$ to the unique solution $\vr$ bounded in $(0,T)\times\O$, of
\begin{eqnarray}\label{2.5}
\left\{
\begin{aligned}
&\d_{t}\vr+\dive (\vr \vu) = 0, \quad \textup{in } \mathcal D^{'}((0,T)\times\mathbb R^{3}),\\
&\vr \in C([0,T];L^{1}(\O)),\quad \vr(0,x) =\vr_{0}(x)  \quad \textup{a.a. in }\O.
\end{aligned}
\right .
\end{eqnarray}
\end{lemma}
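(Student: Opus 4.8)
The statement is the classical stability (compactness) theorem of DiPerna--Lions for the linear continuity equation with a divergence-free Sobolev velocity field, and the plan is to follow that theory in four stages: extract a weak-$*$ limit of $\vr^n$, pass to the limit in the flux $\vr^n\vu^n$ to show the limit solves \eqref{2.5}, invoke uniqueness to identify the limit and upgrade the subsequence to the full sequence, and finally promote weak convergence to strong convergence in $C([0,T];L^p)$ via renormalization and convexity. For the first stage, since $0\le\vr^n\le C$ one extracts (along a subsequence) $\vr^n\rightharpoonup\tilde\vr$ weakly-$*$ in $L^\infty((0,T)\times\O)$ with $0\le\tilde\vr\le C$. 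From the equation $\partial_t\vr^n=-\dive(\vr^n\vu^n)$, and because $\O\subset\R^3$ is bounded with $\vu^n\in W_0^{1,2}(\O)\hookrightarrow L^6(\O)$, the flux $\vr^n\vu^n$ is bounded in $L^2(0,T;L^6(\O))$, so $\partial_t\vr^n$ is bounded in $L^2(0,T;W^{-1,2}(\O))$. Together with the uniform bound of $\vr^n$ in $L^\infty(0,T;L^2(\O))$ and the compact embedding $L^2(\O)\hookrightarrow\hookrightarrow W^{-1,2}(\O)$, the Aubin--Lions--Simon lemma then gives $\vr^n\to\tilde\vr$ strongly in $C([0,T];W^{-1,2}(\O))$.

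The crux is the second stage, where the obstacle is that both factors of $\vr^n\vu^n$ converge only weakly. I would resolve this by compensated compactness: testing the continuity equation against $\psi\in C_c^1([0,T)\times\R^3)$, the flux term is $\int_0^T\langle\vr^n,\vu^n\cdot\nabla\psi\rangle\,dt$, where $\vu^n\cdot\nabla\psi\rightharpoonup\vu\cdot\nabla\psi$ weakly in $L^2(0,T;W_0^{1,2}(\O))$ (the weakly convergent $\vu^n$ multiplied by the fixed smooth field $\nabla\psi$), while $\vr^n\to\tilde\vr$ strongly in $C([0,T];W^{-1,2}(\O))$ from the first stage. The strong$\times$weak duality pairing in $W^{-1,2}\times W_0^{1,2}$ therefore converges, so $\vr^n\vu^n\rightharpoonup\tilde\vr\vu$ in $\mathcal D'$. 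Using in addition $\vr_0^n\to\vr_0$ in $L^1$ and $\dive\vu=0$ (inherited from the weak limit), one passes to the limit in the weak formulation and concludes that $\tilde\vr$ is a bounded distributional solution of \eqref{2.5} with $\tilde\vr(0)=\vr_0$.

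For the third stage, since $\vu\in L^2(0,T;W_0^{1,2})$ is divergence free, the DiPerna--Lions uniqueness theory guarantees that the bounded solution of \eqref{2.5} with datum $\vr_0$ is unique; hence $\tilde\vr=\vr$ is independent of the subsequence and the whole sequence converges. To obtain strong convergence I would use that $\vr^n$ and $\vr$ are renormalized solutions: with $\beta(s)=s^2$ one has $\partial_t(\vr^n)^2+\dive((\vr^n)^2\vu^n)=0$, and integrating over $\O$ (the boundary flux vanishing because $\vu^n\in W_0^{1,2}$ and $\dive\vu^n=0$) yields the conservation $\int_\O(\vr^n(t))^2\,dx=\int_\O(\vr_0^n)^2\,dx$, and likewise for $\vr$. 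Integrating in time and letting $n\to\infty$ (using $\vr_0^n\to\vr_0$ in every $L^p$, $p<\infty$, by the uniform bound) gives $\|\vr^n\|_{L^2((0,T)\times\O)}\to\|\vr\|_{L^2((0,T)\times\O)}$; weak convergence together with convergence of norms forces strong $L^2$ convergence, and then strong $L^p$ convergence for all $1\le p<\infty$ by the uniform $L^\infty$ bound and interpolation. Since the conserved right-hand side is independent of $t$, one has $\|\vr^n(t)\|_{L^2}\to\|\vr(t)\|_{L^2}$ uniformly in $t$, which combined with the $C([0,T];W^{-1,2})$ convergence upgrades the convergence to $C([0,T];L^p(\O))$ for every $1\le p<\infty$.

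The hard part is the second stage: identifying the weak limit of the product $\vr^n\vu^n$ with $\tilde\vr\vu$ when each factor converges only weakly. Everything hinges on trading the time-regularity of $\vr^n$ (read off from the equation) against the spatial $W^{1,2}$-regularity of $\vu^n$; this is precisely the divergence-free structure that also makes the renormalization identity and the $L^2$-norm conservation of the last stage available, so the two genuinely nontrivial inputs are the Aubin--Lions--Simon compactness and the DiPerna--Lions uniqueness for the limiting field.
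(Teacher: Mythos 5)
The paper contains no proof of this lemma to compare against: it is imported wholesale from Lions \cite{Lions-C,Lions-D} with the sentence ``From Lions \ldots, we have the following lemma,'' so the only meaningful benchmark is the classical DiPerna--Lions stability theory, and your argument is precisely that. Each of your four stages is the standard one and is essentially sound: the bound $0\le\vr^n\le C$ plus the equation give $\partial_t\vr^n$ bounded in $L^2(0,T;W^{-1,2}(\O))$, hence Aubin--Lions--Simon compactness of $\vr^n$ in $C([0,T];W^{-1,2}(\O))$; the strong-times-weak duality pairing $W^{-1,2}\times W_0^{1,2}$ passes the flux $\vr^n\vu^n$ to the limit $\vr\vu$; the commutator lemma of \cite{DiP-L} gives both renormalization and uniqueness of bounded solutions for the divergence-free Sobolev field $\vu$ (so the whole sequence, not just a subsequence, converges); and conservation of $\int_\O|\vr^n(t)|^2\,dx$ converts weak into strong convergence. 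In other words, you have reconstructed the cited proof rather than replaced it with a different route.

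Two steps should be written out more carefully to make this complete. First, the final upgrade to $C([0,T];L^2(\O))$: expanding $\|\vr^n(t)-\vr(t)\|_{L^2}^2$ produces the pairing $\langle \vr^n(t)-\vr(t),\vr(t)\rangle$, whose ``test function'' $\vr(t)$ varies with $t$; the $C([0,T];W^{-1,2})$ convergence gives uniformity in $t$ only against a \emph{fixed} test function, so you additionally need that the trajectory $\{\vr(t):t\in[0,T]\}$ is compact in $L^2(\O)$ (true, since $\vr\in C([0,T];L^1(\O))$ and $0\le\vr\le C$) together with a finite-net argument. Second, the hypotheses bound $\vr^n$ only on $\O$, while the equation, the renormalization with $\beta(s)=s^2$, and the commutator estimate live on $(0,T)\times\mathbb R^3$; since the commutator estimate with $\vu^n\in W^{1,2}$ needs $\vr^n\in L^2_{\rm loc}$ in a neighbourhood of $\overline\O$, you should first justify that the zero-extension of $\vr^n|_{\O}$ itself solves the continuity equation in $\mathcal D'((0,T)\times\mathbb R^3)$ --- this uses $\vu^n\in W_0^{1,2}(\O)$, $\dive\vu^n=0$ and a Hardy-inequality cutoff argument near $\partial\O$ --- and only then run renormalization to get $\int_\O|\vr^n(t)|^2\,dx=\int_\O|\vr_0^n|^2\,dx$. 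Both points are standard and fillable, so they are gaps of exposition rather than of substance.
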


By Lemma \ref{DiPerna-Lions},  we have
\be\label{2.6}
\int_{0}^{T}\int_{\mathbb R^{3}}(\vr(t,x)\d_{t}\varphi(t,x)+\vr(t,x)\vu(t,x)\cdot\nabla\varphi(t,x))dxdt=0, \quad \forall~\varphi\in C_{c}^{\infty}((0,T)\times\mathbb R^{3}).
\ee
Taking 
\be
\varphi(t,x)=\psi(t)\phi(x), \quad \psi\in C_c^\infty(0,T), \quad \phi\in C_c^\infty(\mathbb R^{3})\nn
\ee
as a test function, we may infer that the function
\be
t\mapsto\int_{\mathbb R^{3}}\vr(t,x) \phi(x) dx\nn
\ee
is absolutely continuous on $[0,T]$ for any $\phi\in C_c^\infty(\mathbb R^{3})$. 
The initial datum in \eqref{2.5} is satisfied in the sense that
\ba
\lim_{t\rightarrow 0^{+}}\int_{\mathbb R^{3}}\vr(t,x)\phi(x) dx
=\int_{\mathbb R^{3}}\vr_0(x)\phi(x) dx,\quad\text{for any } \phi\in C_c^\infty(\mathbb R^{3}).\nn
\ea

Let $\psi_{\e}\in C_{c}^{\infty}(0,T)$ and 
\be
0\leq \psi_{\e}\leq 1, \quad \psi_{\e}\nearrow 1_{[0,T]}, \quad \mbox{as } \e \rightarrow 0.\nn
\ee
For any $\varphi\in C_{c}^{\infty}([0,T)\times\mathbb R^{3})$, we take 
\be
\varphi_{\e}(t,x)=\psi_{\e}(t)\varphi(t,x),\quad \varphi\in C_{c}^{\infty}([0,T)\times\mathbb R^{3}),\nn
\ee
as a test function in \eqref{2.6}. Then, we have
\be\label{2.7}
\int_{0}^{T}\int_{\mathbb R^{3}}(\vr(t,x)\d_{t}(\psi_{\e}(t)\varphi(t,x))+\vr(t,x)\vu(t,x)\cdot\nabla(\psi_{\e}(t)\varphi(t,x)))dxdt=0.
\ee
Letting $\e \rightarrow 0$ in \eqref{2.7}, we conclude that
\be\label{2.8}
-\int_{\mathbb R^{3}}\vr_{0}\varphi(0,x)dx=\int_{0}^{T}\int_{\mathbb R^{3}}\vr(\d_{t}\varphi+\vu\cdot\nabla\varphi)dxdt
\ee
 for any $\varphi\in C_{c}^{1}([0,T)\times \mathbb R^{3})$. By \eqref{2.8}, we obtain that the limit $(\vr,\vu)$ satisfies the weak formulation of continuity equation. 

DiPerna-Lions theory again shows that $(\vr,\vu)$ satisfy the equation in the renormalized sense \eqref{1.9}, which implies
\be\label{2.9}
0<\underline\vr\leq\vr\leq\overline\vr,\quad \textup{a.a. }(t,x).
\ee
By Lemma \ref{DiPerna-Lions}, we have
\be\label{2.10}
\vr_{\e}\rightarrow \vr \quad\mbox{in } C([0,T];L^{q}(\O))\mbox{ for any } 1\leq q<\infty.
\ee
Thanks to \eqref{2.1}, \eqref{2.9} and \eqref{2.10}, we have
\be\label{2.11}
\sqrt{\vr_{\e}}\rightarrow \sqrt{\vr} \quad\mbox{weakly-(*) in }L^{\infty}([0,T]\times \O)\mbox{ and strongly in }C([0,T];L^{q}(\O))\mbox{ for any }1\leq q<\infty.
\ee

\subsection{Compactness in time of $\sqrt{\vr_\e}\vu_\e$}

 For $\mu>0$, we consider stationary Stokes problem in the form:
\begin{eqnarray}\label{2.12}
\left\{
\begin{aligned}
&-\mu\Delta \vv_{\e}+\nabla q_{\e}= \vf_{\e},  \quad&\mbox{in }&\O_{\e},\\
&\dive \vv_{\e}=0, \quad&\mbox{in }&\O_{\e},\\
&\vv_\e|_{\d\O_{\e}}=0.
\end{aligned}
\right .
\end{eqnarray}
Denoting $L_{0}^{2}(\O_{\e}):=\{f\in L^{2}(\O_{\e}): \int_{\O_{\e}}f dx=0\}$, we say that $(\vv_{\e},q_{\e})\in W_{0}^{1,2}(\O_{\e};\mathbb R^{3})\times L_{0}^{2}(\O_{\e})$ is a weak solution to problem \eqref{2.12} if the following integral identities hold:
\be
\int_{\O_{\e}} \nabla \vv_{\e}: \nabla\vh_{\e}dx-\int_{\O_{\e}} q_{\e} \nabla\cdot\vh_{\e}dx=\int_{\O_{\e}}\vf_{\e}\cdot\vh_{\e}dx,\quad \forall~\vh_{\e}\in W_{0}^{1,2}(\O_{\e};\mathbb R^{3}),\nn
\ee
\be
\int_{\O_{\e}} r_{\e} \nabla\cdot\vv_{\e}dx=0,\quad \forall~ r_{\e}\in L_{0}^{2}(\O_{\e}).\nn
\ee
It is known that the problem \eqref{2.12} admits a unique weak solution $(\vv_{\e},q_{\e})$ for any $\vf_{\e}\in W^{-1,2}(\O;\mathbb R^{3})$, $\e>0$ fixed. All functions in \eqref{2.12} defined in $\O_{\e}$ are extended to be zero in $\O\backslash \O_{\e}$. Particularly, $\vf_{\e}$ can be viewed as a functional in $W^{-1,2}(\O_{\e};\mathbb R^{3})$. 

Similarly as Proposition 5.1 in \cite{FeNaNe},  we have the following limit result for the stationary Stokes problem in \eqref{2.12}.

\begin{proposition}\label{Stationary stokes problem} Assume that
\be\label{2.13}
\vf_{\e}\rightarrow \ff \quad \mbox{in } W^{-1,2}(\O;\mathbb R^{3}).
\ee
Let $(\vv_{\e},q_{\e})$ be the unique weak solution to the stationary Stokes problem \eqref{2.12} in $\O_{\e}$. Then, at least for a suitable subsequence,
\be\label{2.14}
\vv_{\e}\rightarrow \vv \quad \mbox{weakly in } W_{0}^{1,2}(\O;\mathbb R^{3}), \quad q_{\e}\rightarrow q \quad \mbox{weakly in } L^{2}(\O),
\ee
where $(\vv,q)$ is the unique weak solution to the problem
\begin{eqnarray}\label{2.15}
\left\{
\begin{aligned}
&-\mu\Delta \vv+\mu\mathbb C\vv+\nabla q= \vf,  \quad&\mbox{in }&\O,\\
&\dive \vv=0, \quad&\mbox{in }&\O,\\
&\vv|_{\d\O}=0,
\end{aligned}
\right .
\end{eqnarray}
with the matrix $\mathbb C$ determined in \eqref{1.12}.
\end{proposition}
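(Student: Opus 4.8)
The argument is the Stokes analogue of Tartar's oscillating–test–function method, carried out exactly as in Proposition 5.1 of \cite{FeNaNe}; the perforation enters only through the capacity data \eqref{1.10}--\eqref{1.12}. The plan is to (a) get uniform bounds and extract weak limits, (b) construct cell correctors encoding the Stokes capacity, (c) pass to the limit in the weak formulation, the correctors producing the Brinkman term $\mu\mathbb C\vv$, and (d) identify the limit by uniqueness. First I would test \eqref{2.12} with $\vv_\e$ itself: by Lemma \ref{Poincare inequality}, $\mu\|\nabla\vv_\e\|_{L^2(\O_\e)}^2\le\|\vf_\e\|_{W^{-1,2}(\O_\e)}\|\vv_\e\|_{W_0^{1,2}(\O_\e)}\le C\|\nabla\vv_\e\|_{L^2(\O_\e)}$, so $\|\vv_\e\|_{W_0^{1,2}(\O_\e)}\le C$ uniformly. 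Extending by zero, along a subsequence $\vv_\e\to\vv$ weakly in $W_0^{1,2}(\O)$ and, by Rellich, strongly in $L^2(\O)$; passing to the limit in $\dive\vv_\e=0$ and in the vanishing trace gives $\vv\in W_0^{1,2}(\O)$, $\dive\vv=0$. For the pressure I would extend $q_\e$ to $\tilde q_\e\in L^2_0(\O)$ using (the adjoint of) a Tartar--Allaire restriction operator $R_\e\colon W_0^{1,2}(\O)\to W_0^{1,2}(\O_\e)$ whose norm stays bounded in the critical regime $a_\e=\e^3$; this yields $\|\tilde q_\e\|_{L^2(\O)}\le C$ and, up to a further subsequence, $\tilde q_\e\to q$ weakly in $L^2(\O)$.

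The heart of the proof is the construction of cell correctors. For each $i\in\{1,2,3\}$ I would rescale the model solution $\vv^i,q^i$ of \eqref{1.11} onto a ball $B(x_{\e,k},a_\e)$ around each hole (the rescaling $x=x_{\e,k}+a_\e y$ sends $Q$ to the physical hole), set $\mathbf{w}_\e^i:=\mathbf e^i$ outside these balls and $\mathbf{w}_\e^i:=\mathbf e^i-\vv^i_{\mathrm{resc}}$ inside, and let $\pi_\e^i$ be the rescaled pressure. Since $\vv^i=\mathbf e^i$ on $\partial Q$ and $\vv^i=0$ on $\partial B(0,1)$, the resulting field satisfies: (i) $\mathbf{w}_\e^i=0$ on every hole $T_{\e,k}$ and $\dive\mathbf{w}_\e^i=0$; (ii) $\mathbf{w}_\e^i\to\mathbf e^i$ strongly in $L^2(\O)$ and weakly in $W^{1,2}(\O)$ (the gradients are $O(1)$ in $L^2$ but concentrate on balls of vanishing total volume, hence converge weakly to $0$); and (iii) $-\mu\Delta\mathbf{w}_\e^i+\nabla\pi_\e^i=\mu\,\bm{\nu}_\e^i$ in $\O$, where $\bm{\nu}_\e^i$ is the interface measure carrying the Stokes normal stress. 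The decisive point, and the main obstacle, is to verify from \eqref{1.10}--\eqref{1.12} that $\bm{\nu}_\e^i\to\mathbb C\,\mathbf e^i$ strongly in $W^{-1,2}(\O)$: unwinding the pairing of $\bm{\nu}_\e^i$ against a smooth test field reduces, by the scaling of \eqref{1.10}, to $\sum_k C_{j,i}(T_{\e,k})\,(\cdot)(x_{\e,k})$, and the hypothesis \eqref{1.12} identifies the limit as $\int_\O D_{j,i}(\cdot)$. One must also check $\|\pi_\e^i\|_{L^2(\O)}\le C$ with $\pi_\e^i\to0$ weakly (again a concentration-on-small-sets estimate).

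With the correctors in hand I would pass to the limit. Given a solenoidal $\bm\varphi\in C_c^\infty(\O;\mathbb R^3)$ with components $\varphi_i$, the field $\vh_\e:=\sum_{i=1}^3\varphi_i\,\mathbf{w}_\e^i$ vanishes on the holes and so is admissible in the weak form of \eqref{2.12}. Expanding $\nabla\vv_\e:\nabla\vh_\e$ into a "slow'' part $\sum_i(\mathbf{w}_\e^i)\cdot$-weighted $\nabla\vv_\e:\nabla\varphi_i$ and a "fast'' part $\sum_i\varphi_i\,\nabla\vv_\e:\nabla\mathbf{w}_\e^i$, the slow part converges to $\mu\int_\O\nabla\vv:\nabla\bm\varphi$ by weak$\times$strong convergence (using (ii) and strong $L^2$ convergence of $\vv_\e$). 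For the fast part I would integrate by parts to transfer the derivative onto the corrector, then use the corrector equation (iii) tested against $\varphi_i\vv_\e$ (which vanishes on holes): the pressure contribution $\int\pi_\e^i\nabla\varphi_i\cdot\vv_\e$ and the remainder $\int(\nabla\mathbf{w}_\e^i)(\nabla\varphi_i\otimes\vv_\e)$ both vanish (weak$\times$strong, using $\pi_\e^i\to0$ and $\nabla\mathbf{w}_\e^i\to0$ weakly, $\vv_\e\to\vv$ strongly), while the interface term gives $\mu\langle\bm{\nu}_\e^i,\varphi_i\vv_\e\rangle\to\mu\int_\O(\mathbb C\,\mathbf e^i)\cdot(\varphi_i\vv)$; summing over $i$ and using symmetry of $\mathbb C$ produces precisely the Brinkman term $\mu\int_\O\mathbb C\vv\cdot\bm\varphi$. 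Here the dual pairing $W^{-1,2}\times W_0^{1,2}$ converges because $\bm{\nu}_\e^i\to\mathbb C\mathbf e^i$ strongly in $W^{-1,2}$ while $\varphi_i\vv_\e\to\varphi_i\vv$ weakly in $W_0^{1,2}$. The pressure/divergence term $\int\tilde q_\e\,\dive\vh_\e$ passes to $\int q\,\dive\bm\varphi$ since $\dive\vh_\e=\sum_i\nabla\varphi_i\cdot\mathbf{w}_\e^i\to\dive\bm\varphi$ strongly, and the right-hand side converges via $\vf_\e\to\ff$ in $W^{-1,2}$ against $\vh_\e\to\bm\varphi$ weakly in $W_0^{1,2}$.

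Collecting the limits gives, for all solenoidal $\bm\varphi\in C_c^\infty(\O;\mathbb R^3)$, $\mu\int_\O\nabla\vv:\nabla\bm\varphi+\mu\int_\O\mathbb C\vv\cdot\bm\varphi=\langle\ff,\bm\varphi\rangle$, which is the weak form of \eqref{2.15} on divergence-free fields; the limit pressure $q$ is then recovered by de Rham's theorem and shown to coincide with the weak limit $\tilde q_\e\to q$. Finally, since $\mathbb C$ is symmetric positive definite, the Brinkman operator in \eqref{2.15} is coercive on $W_0^{1,2}(\O)$ and the weak solution $(\vv,q)$ is unique; as every subsequence therefore has the same limit, the full family converges, which establishes \eqref{2.14}. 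I expect the genuinely delicate points to be the corrector construction together with the $W^{-1,2}$-convergence $\bm{\nu}_\e^i\to\mathbb C\mathbf e^i$, and the uniform pressure extension; the remaining passages are routine weak$\times$strong limits.
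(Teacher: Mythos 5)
The paper never actually proves this proposition: it is invoked as an analogue of Proposition 5.1 in \cite{FeNaNe}, so the ``paper's proof'' is a citation, not an argument. Your proposal is a faithful reconstruction of exactly that cited proof --- energy bounds plus a restriction-operator pressure extension (uniform here because $\sigma_\e=1$), correctors obtained by rescaling the cell problem \eqref{1.11} to the holes so that the capacities \eqref{1.10} and hypothesis \eqref{1.12} produce the Brinkman term, Tartar's oscillating-test-function limit passage resting on the strong $W^{-1,2}$ convergence of the interface distributions, and identification of the limit by uniqueness --- so it takes essentially the same approach the paper relies on, and the points you flag as delicate are precisely the content of the cited result.
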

\begin{lemma}\label{Compactness in time} Let $(\vr_{\e},\vu_{\e})$ be a family of weak solutions of problem \eqref{1.3} with initial data given in \eqref{1.4}. Then, we have 
\be
\sqrt{\vr_\e}\vu_\e\to\sqrt{\vr}\vu \quad\mbox{strongly in } L^{2}((0,T)\times\O;\mathbb R^{3})\mbox{ as } \e \rightarrow 0.\nn
\ee
\end{lemma}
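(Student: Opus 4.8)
Strong $L^2((0,T)\times\O)$ convergence of $\sqrt{\vr_\e}\vu_\e$ will follow once I prove weak convergence together with convergence of the $L^2$ norms, and the norms are controlled through the momenta $\vc{m}_\e:=\vr_\e\vu_\e$, whose time-compactness I extract from the momentum balance \eqref{1.7} combined with the strong density convergence \eqref{2.10}--\eqref{2.11}. First, since $\vr_\e$ is bounded above and below by \eqref{2.1}, the $\sqrt{\vr_\e}$ are bounded in $L^\infty$ and converge a.e.\ along the subsequence; as $\vu_\e\rightharpoonup\vu$ weakly in $L^2((0,T)\times\O)$ by \eqref{2.4}, dominated convergence applied to $\sqrt{\vr_\e}\,\phi$ for $\phi\in L^2$ gives $\sqrt{\vr_\e}\vu_\e\rightharpoonup\sqrt{\vr}\,\vu$ and $\vc{m}_\e\rightharpoonup\vr\vu=:\vc{m}$, both weakly in $L^2$. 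Since in the Hilbert space $L^2((0,T)\times\O)$ weak convergence together with convergence of the norms implies strong convergence, it remains only to establish
\[
\int_0^T\!\!\int_\O\vr_\e|\vu_\e|^2\,dx\,dt\longrightarrow\int_0^T\!\!\int_\O\vr|\vu|^2\,dx\,dt.
\]

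\textbf{Time-equicontinuity of the momenta.} Fix a solenoidal $\bm\varphi\in C_c^\infty(\O;\R^3)$. Since the holes are distributed throughout $\O$, $\bm\varphi$ is not admissible in \eqref{1.7}, so I replace it by $R_\e\bm\varphi$, where $R_\e\colon W_0^{1,2}(\O)\to W_0^{1,2}(\O_\e)$ is the restriction operator of Allaire \cite{ALL-NS1} and Lu \cite{L}: it preserves the divergence-free constraint, satisfies $\|R_\e\bm\varphi\|_{W_0^{1,2}(\O_\e)}\le C\|\bm\varphi\|_{W_0^{1,2}(\O)}$, and in the critical regime $a_\e=\e^3$ obeys $\|\bm\varphi-R_\e\bm\varphi\|_{L^2(\O)}\to0$ (only its gradient fails to converge, which is exactly the source of the Brinkman term). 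For $0\le s<t\le T$ I split
\[
\langle\vc{m}_\e(t)-\vc{m}_\e(s),\bm\varphi\rangle=\langle\vc{m}_\e(t)-\vc{m}_\e(s),R_\e\bm\varphi\rangle+\langle\vc{m}_\e(t)-\vc{m}_\e(s),\bm\varphi-R_\e\bm\varphi\rangle.
\]
The first term is computed from \eqref{1.7} as $\int_s^t\!\big(\vr_\e\vu_\e\otimes\vu_\e:\nabla R_\e\bm\varphi-\mu\nabla\vu_\e:\nabla R_\e\bm\varphi+\vr_\e\vf_\e\cdot R_\e\bm\varphi\big)\,d\tau$, bounded by $C\|\bm\varphi\|_{W_0^{1,2}}\int_s^t h_\e\,d\tau$ with $h_\e:=\overline\vr\|\vu_\e\|_{L^4}^2+\mu\|\nabla\vu_\e\|_{L^2}+\overline\vr\|\vf_\e\|_{L^2}$; by the interpolation $\|\vu_\e\|_{L^4}^2\le\|\vu_\e\|_{L^2}^{1/2}\|\vu_\e\|_{L^6}^{3/2}$ and the bounds \eqref{2.2}, the family $\{h_\e\}$ is bounded in $L^{4/3}(0,T)$, hence uniformly integrable. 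The second term is at most $2\|\vc{m}_\e\|_{L^\infty(0,T;L^2)}\|\bm\varphi-R_\e\bm\varphi\|_{L^2}$, which tends to $0$ uniformly in $s,t$. Consequently $t\mapsto\langle\vc{m}_\e(t),\bm\varphi\rangle$ is equicontinuous on $[0,T]$, uniformly in $\e$ (the finitely many large-$\e$ indices being handled by the individual time-continuity of each weak solution).

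\textbf{Compactness and convergence of the norms.} Let $P$ be the Helmholtz projection onto solenoidal $L^2(\O)$ fields; for solenoidal $\bm\varphi$ one has $\langle\vc{m}_\e,\bm\varphi\rangle=\langle P\vc{m}_\e,\bm\varphi\rangle$, so the previous step, together with $\|P\vc{m}_\e\|_{L^\infty(0,T;L^2)}\le C$ and a countable dense family of solenoidal fields, yields via Arzel\`a--Ascoli that $P\vc{m}_\e\to P\vc{m}$ in $C([0,T];L^2(\O)\text{-weak})$. Because $L^2(\O)\hookrightarrow\hookrightarrow W^{-1,2}(\O)$ compactly, this upgrades to strong convergence $P\vc{m}_\e\to P\vc{m}$ in $C([0,T];W^{-1,2}(\O))$: any defect $\|P\vc{m}_\e(t_\e)-P\vc{m}(t_\e)\|_{W^{-1,2}}\ge\eta$ along $t_\e\to t_*$ is excluded, since a weak-$L^2$ limit of $P\vc{m}_\e(t_\e)$ must coincide with $P\vc{m}(t_*)$ and the compact embedding converts the weak-$L^2$ into strong-$W^{-1,2}$ convergence. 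Finally, $\vu_\e$ being solenoidal with zero trace gives $\int_\O\vc{m}_\e\cdot\vu_\e=\int_\O P\vc{m}_\e\cdot\vu_\e$, so pairing the strong $L^2(0,T;W^{-1,2})$ convergence of $P\vc{m}_\e$ with the weak $L^2(0,T;W_0^{1,2})$ convergence \eqref{2.4} of $\vu_\e$ produces exactly the norm convergence displayed above; with the weak convergence this proves the lemma.

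\textbf{Main obstacle.} The crux is the equicontinuity step: the holes forbid testing the momentum balance against solenoidal fields of the whole domain $\O$, and the discrepancy must be routed through the restriction operator. What makes this succeed is precisely the pair of perforation estimates underlying the Brinkman correction --- the uniform $W_0^{1,2}$-boundedness of $R_\e$ and, decisively, $\|\bm\varphi-R_\e\bm\varphi\|_{L^2}\to0$ in the critical regime $a_\e=\e^3$ --- while the interpolation placing the convective term in $L^{4/3}(0,T)$ is what secures equicontinuity uniformly in $\e$.
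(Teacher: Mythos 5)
Your proof is correct, and its overall skeleton coincides with the paper's: first get $\sqrt{\vr_\e}\vu_\e\rightharpoonup\sqrt{\vr}\vu$ weakly in $L^2((0,T)\times\O)$ from \eqref{2.4} and \eqref{2.11}; then prove $\int_0^T\int_\O\vr_\e|\vu_\e|^2\,dx\,dt\to\int_0^T\int_\O\vr|\vu|^2\,dx\,dt$ by extracting time-compactness of the momenta $\vr_\e\vu_\e$ tested against fixed solenoidal fields and pairing the resulting strong negative-norm convergence with the weak $L^2(0,T;W_0^{1,2})$ convergence of $\vu_\e$; finally invoke the Hilbert-space criterion ``weak convergence plus convergence of norms implies strong convergence''. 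Where you genuinely differ is the device that turns a fixed solenoidal $\bm\varphi\in C_c^\infty(\O;\R^3)$ into an admissible test field on the perforated domain: you use the Allaire--Lu restriction operator $R_\e$, relying on its uniform $W^{1,2}$ bound in the critical regime, its preservation of the divergence-free constraint, and the Poincar\'e-type estimate $\|\bm\varphi-R_\e\bm\varphi\|_{L^2(\O)}\le C\e\to 0$. The paper instead manufactures its test fields $\ww_\e$ by solving the Stokes problem \eqref{2.12} in $\O_\e$ with right-hand side $1_{\O_\e}(-\mu\Delta\ww+\mu\CC\ww)$: by Proposition \ref{Stationary stokes problem} the $\ww_\e$ converge weakly in $W_0^{1,2}(\O)$ to the solution of the Brinkman problem \eqref{2.15}, and the force is chosen precisely so that this limit is $\ww$ itself, whence Rellich--Kondrachov gives $\ww_\e\to\ww$ strongly in $L^2(\O)$ --- this strong $L^2$ approximation plays exactly the role of your $\|\bm\varphi-R_\e\bm\varphi\|_{L^2}\to 0$. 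The trade-off: your route is more quantitative (explicit $O(\e)$ error, and an explicit $L^{4/3}$-in-time bound on the convective term securing equicontinuity) and does not need Proposition \ref{Stationary stokes problem} at this stage, but it imports the cell-by-cell restriction-operator construction and requires checking it applies to the present configuration, where the hole locations $x_{\e,k}$ and shapes $U_{k,\e}$ vary from cell to cell (Lu \cite{L} covers this); the paper's route is less explicit but reuses machinery it needs anyway for the limit momentum equation \eqref{2.36}, and avoids any new construction. One shared gloss: both you and the paper test \eqref{1.7} with fields that are merely solenoidal $W_0^{1,2}(\O_\e)$ rather than $C_c^1$, which requires a routine density argument that neither writes out.
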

\begin{proof}
Given $\ww\in C_{c}^{\infty}(\O;\mathbb R^{3})$, $\dive \ww=0$, let
\be
\vf=-\mu\Delta \ww+\mu\mathbb C\ww \in L^{\infty}(\O;\mathbb R^{3}).\nn
\ee
Let $\vf_{\e}=1_{\O_{\e}}\vf$, and $\ww_{\e}$ be the unique solution to Stokes problem
\be
-\mu\Delta \ww_{\e}+\nabla P_{\e}=\vf_{\e}, \quad \dive \ww_{\e}=0\quad \mbox{in }  \O_{\e},  \quad \ww_{\e}\in W_{0}^{1,2}(\O_{\e};\mathbb R^{3}).\nn
\ee
 By Proposition \ref{Stationary stokes problem}, up to a subsequence, we have
\be
\ww_{\e}\rightarrow \ww \quad \mbox{weakly in } W_{0}^{1,2}(\O;\mathbb R^{3}).\nn
\ee
Moreover, by Rellich-Kondrachov theorem, the Sobolev space $W^{1,2}(\O)$ is compactly embedded in $L^{2}(\O)$. Thus we have
\be
\ww_{\e}\rightarrow \ww \quad \mbox{strongly in } L^{2}(\O;\mathbb R^{3}).\nn
\ee
Combined with \eqref{2.1} and \eqref{2.2}, this gives
\be\label{2.16}
\ess\sup_{t\in[0,T]}\int_{\O}\vr_{\e} \vu_{\e}\cdot (\ww_{\e}-\ww) dx\rightarrow 0 \quad \mbox{as } \e \rightarrow 0.
\ee

Taking 
\be\label{2.17}
\bm\varphi(t,x)=\psi(t)\ww_{\e}(x), \quad \psi(t)\in C_{c}^{\infty}(0,T)
\ee
 in \eqref{1.7}, by Arzel\`a-Ascoli theorem, we deduce that the family of functions 
\be\label{2.18}
[t\mapsto\int_{\O}\vr_{\e} \vu_{\e}\cdot \ww_{\e} dx] \quad \mbox{is precompact in } C([0,T]).
\ee

Recalling \eqref{2.4} and \eqref{2.10}, we have
\be
\vu_{\e}\rightarrow \vu \quad\mbox{ weakly-(*) in } L^{\infty}(0,T;L^{2}({\Omega})),\nn
\ee
and 
\be
\vr_{\e}\rightarrow \vr \quad\mbox{in } C([0,T];L^{2}(\O)).\nn
\ee
Since
\be\label{2.19}
[t\mapsto\int_{\O}\vr_{\e} \vu_{\e}\cdot \ww dx]=[t\mapsto\int_{\O}\vr_{\e} \vu_{\e}\cdot (\ww-\ww_{\e}) dx]+[t\mapsto\int_{\O}\vr_{\e} \vu_{\e}\cdot \ww_{\e} dx],
\ee
combining \eqref{2.16} and \eqref{2.18}, we conclude that
\be\label{2.20}
[t\mapsto\int_{\O}\vr_{\e} \vu_{\e}\cdot \ww dx]\rightarrow [t\mapsto\int_{\O}\vr \vu\cdot \ww dx] \quad \mbox{in } L^{\infty}(0,T) \mbox{ for any }\ww\in C_{c}^{\infty}(\O;\mathbb R^{3}),\quad \dive \ww=0.
\ee
Using the density of smooth compactly supported functions in $W_{0,\dive}^{1,2}(\O;\mathbb R^{3})$-the Sobolev space $W_{0}^{1,2}(\O;\mathbb R^{3})$ of solenoidal vector fields, we deduce
\be\label{2.21}
\vr_{\e}\vu_{\e}\rightarrow \vr\vu \quad \mbox{in } L^{q}([0,T];W_{\dive}^{-1,2}(\O;\mathbb R^{3})), \quad 1\leq q<\infty.
\ee
Thanks to \eqref{2.4} and \eqref{2.21}, we have
\be\label{2.22}
\int_{0}^{T}\int_{\O}\vr_{\e} |\vu_{\e}|^{2}dxdt\rightarrow \int_{0}^{T}\int_{\O}\vr |\vu|^{2}dxdt \quad \mbox{as } \e \rightarrow 0.
\ee

By the property that $\vu_{\e}=0$ in $\O\backslash \O_{\e}$, using \eqref{2.3}, we have
\be\label{2.23}
\int_{0}^{T}\int_{\Omega}\vr_{\e} |\vu_{\e}|^{2}dxdt\leq C.
\ee
Thus, up to a subsequence, we have
\be
\int_{0}^{T}\int_{\O}\sqrt{\vr_\e}\vu_\e(t,x)\cdot \bm{\tilde\varphi} dxdt\rightarrow\int_{0}^{T}\int_{\O}\overline{\sqrt{\vr}\vu}\cdot \bm{\tilde\varphi} dxdt \quad \mbox{as } \e \rightarrow 0,\nn
\ee
for any $\bm{\tilde\varphi}\in{L^{2}((0,T)\times\O;\mathbb R^{3})}$, where $\overline{\sqrt{\vr}\vu}$ denotes a weak  limit of $\sqrt{\vr_\e}\vu_\e$ in $L^{2}((0,T)\times\O;\mathbb R^{3})$. 

By \eqref{2.4} and \eqref{2.11}, we have
\be
\int_{0}^{T}\int_{\O}\sqrt{\vr_\e}\vu_\e(t,x)\cdot \bm\varphi dxdt\rightarrow\int_{0}^{T}\int_{\O}\sqrt{\vr}\vu\cdot \bm\varphi dxdt \quad \mbox{as } \e \rightarrow 0,\nn
\ee
for all $\bm\varphi\in C_{c}^{\infty}((0,T)\times \O;\mathbb R^{3})$. Thus
\be\label{2.24}
\sqrt{\vr_\e}\vu_\e\to\sqrt{\vr}\vu \quad\mbox{weakly in } L^{2}((0,T)\times\O;\mathbb R^{3})\mbox{ as } \e \rightarrow 0.
\ee
Then, \eqref{2.22} and \eqref{2.24} yield 
\be\label{2.25}
\sqrt{\vr_\e}\vu_\e\to\sqrt{\vr}\vu \quad\mbox{strongly in } L^{2}((0,T)\times\O;\mathbb R^{3})\mbox{ as } \e \rightarrow 0.
\ee
\end{proof}
 \subsection{Asymptotic limit of the momentum equation}
 Now we consider the asymptotic limit of the momentum equation (second equation in \eqref{1.3}). We use the time regularization by means of a convolution with a family of regularization kernels $\o_{\delta}=\o_{\delta}(t)$ satisfying:
\be
\o_{\delta}(t)=\frac{1}{\delta}\o(\frac{t}{\delta}),\quad \o\in C_{c}^{\infty}(-1,1),\quad \o\geq 0,\quad \o(-z)=\o(z),\quad \o'(z)\leq 0 \mbox{ for } z\geq 0,\nn
\ee
and
\be
\int_{-1}^{1}\o(z)dz=1.\nn
\ee

 For any $\bm{\tilde\phi}\in W_{0}^{1,2}(\O_{\e};\mathbb R^{3})$ with $\dive \bm{\tilde\phi}=0$, consider $\o_{\delta}(\t-t)\bm{\tilde\phi}(x)$ as a test function in the weak formulation \eqref{1.7}. Denoting $[v]_{\delta}=\o_{\delta}*v$, we get 
\be\label{2.26}
\mu\int_{\O_{\e}}\nabla[\vu_{\e}(\t,\cdot)]_{\delta}:\nabla \bm{\tilde\phi} dx=\int_{\O_{\e}}[\vr_{\e}\vf_{\e}(\t,\cdot)]_{\delta}\cdot \bm{\tilde\phi} dx+\int_{\O_{\e}}[\vr_{\e}\vu_{\e}\otimes \vu_{\e}(\t,\cdot)]_{\delta}:\nabla \bm{\tilde\phi} dx-\int_{\O_{\e}}\d_{\t}[\vr_{\e}\vu_{\e}(\t,\cdot)]_{\delta}\cdot\bm{\tilde\phi} dx,
\ee
for any $\t\in (\delta, T-\delta)$. 

Now, we verify the convergence of $\d_{\t}[\vr_{\e}\vu_{\e}(\t,\cdot)]_{\delta}$, $[\vr_{\e}\vf_{\e}(\t,\cdot)]_{\delta}$ and $[\vr_{\e}\vu_{\e}\otimes \vu_{\e}(\t,\cdot)]_{\delta}$ for any fixed $\t\in(\delta,T-\delta)$.
\begin{proposition}\label{convergence} Let $(\vr_{\e},\vu_{\e})$ be a family of weak solutions of problem \eqref{1.3} with initial data given in \eqref{1.4}. For any $\t\in(\delta,T-\delta)$, we have
\ba
&\d_{\t}[\vr_{\e}\vu_{\e}(\t,\cdot)]_{\delta}\rightarrow\d_{\t}[\vr\vu(\t,\cdot)]_{\delta} \quad\mbox{in } W^{-1,2}(\O;\mathbb R^{3}) \mbox{ as } \e \rightarrow 0,\\
&[\vr_{\e}\vf_{\e}(\t,\cdot)]_{\delta}\rightarrow[\vr \vf(\t,\cdot)]_{\delta} \quad\mbox{in } W^{-1,2}(\O;\mathbb R^{3}) \mbox{ as } \e \rightarrow 0,\\
&[\dive(\vr_{\e}\vu_{\e}\otimes \vu_{\e})(\t,\cdot)]_{\delta}\rightarrow[\dive(\vr\vu\otimes \vu)(\t,\cdot)]_{\delta} \quad\mbox{in } W^{-1,2}(\O;\mathbb R^{3}) \mbox{ as } \e \rightarrow 0.\nn
\ea
\end{proposition}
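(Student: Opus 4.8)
The plan is to pass to the limit, at a fixed $\t\in(\delta,T-\delta)$, in each of the three time-regularized quantities, and the unifying observation is that for fixed $\t$ the maps $v\mapsto[v(\t,\cdot)]_\delta=\int_0^T\o_\delta(\t-t)v(t,\cdot)\,dt$ and $v\mapsto\d_\t[v(\t,\cdot)]_\delta=\int_0^T\o_\delta'(\t-t)v(t,\cdot)\,dt$ are integrations against a \emph{fixed}, smooth, compactly supported time weight. Consequently, if $v_\e\rightharpoonup v$ weakly in $L^2((0,T)\times\O;\mathbb R^3)$ and $\{v_\e\}$ is bounded there, then for each fixed $\t$ the mollified quantity (or its $\t$-derivative) converges weakly in $L^2(\O)$ and stays bounded in $L^2(\O)$; since the embedding $L^2(\O)\hookrightarrow\hookrightarrow W^{-1,2}(\O)$ is compact (it is the dual of the Rellich embedding $W^{1,2}_0(\O)\hookrightarrow\hookrightarrow L^2(\O)$), this weak convergence is automatically upgraded to strong convergence in $W^{-1,2}(\O;\mathbb R^3)$. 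This disposes of the first two terms once the relevant weak $L^2$ limits are identified.

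For the momentum and force terms I would identify these limits from the ``strong density times weak field'' structure. By \eqref{2.1} and \eqref{2.10}, $\vr_\e\to\vr$ strongly in $C([0,T];L^q(\O))$ and is bounded in $L^\infty$, so $\vr_\e g\to\vr g$ strongly in $L^2((0,T)\times\O)$ for every $g\in L^2((0,T)\times\O)$ by dominated convergence. Pairing against the weak limits $\vu_\e\rightharpoonup\vu$ from \eqref{2.4} and $\vf_\e\rightharpoonup\vf$ from \eqref{1.13} then gives $\vr_\e\vu_\e\rightharpoonup\vr\vu$ and $\vr_\e\vf_\e\rightharpoonup\vr\vf$, both weakly in $L^2((0,T)\times\O;\mathbb R^3)$ (both sequences are bounded there since $\vr_\e\in L^\infty$ and $\vu_\e,\vf_\e\in L^2$). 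The observation of the previous paragraph then yields $\d_\t[\vr_\e\vu_\e(\t,\cdot)]_\delta\to\d_\t[\vr\vu(\t,\cdot)]_\delta$ and $[\vr_\e\vf_\e(\t,\cdot)]_\delta\to[\vr\vf(\t,\cdot)]_\delta$ strongly in $W^{-1,2}(\O;\mathbb R^3)$.

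The convective term is the genuine difficulty, and here the time averaging plays a second, essential role. Weak $L^2$ control is unavailable: writing $\vr_\e\vu_\e\otimes\vu_\e=(\sqrt{\vr_\e}\vu_\e)\otimes(\sqrt{\vr_\e}\vu_\e)$ and using \eqref{2.2}--\eqref{2.3}, the factor $\sqrt{\vr_\e}\vu_\e$ is bounded in $L^\infty(0,T;L^2)\cap L^2(0,T;L^6)$, hence in $L^{10/3}((0,T)\times\O)$, so the tensor is only bounded in $L^{5/3}((0,T)\times\O)$---too little for $\dive$ to land in $W^{-1,2}$ in three dimensions. The remedy is that the same bounds give the tensor bounded in $L^1(0,T;L^3(\O))$ (because $\|\vu_\e(t)\|_{L^6}^2\in L^1_t$), and since $\o_\delta\in L^\infty$ the estimate $\|[\vr_\e\vu_\e\otimes\vu_\e(\t,\cdot)]_\delta\|_{L^3(\O)}\le\|\o_\delta\|_{L^\infty}\|\vr_\e\vu_\e\otimes\vu_\e\|_{L^1(0,T;L^3(\O))}$ makes the \emph{mollified} tensor bounded in $L^3(\O)$ at fixed $\t$. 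For the convergence I would invoke Lemma \ref{Compactness in time}: $\sqrt{\vr_\e}\vu_\e\to\sqrt{\vr}\vu$ strongly in $L^2$ forces $\vr_\e\vu_\e\otimes\vu_\e\to\vr\vu\otimes\vu$ strongly in $L^1((0,T)\times\O)$, which interpolated against the uniform $L^{5/3}$ bound improves to strong convergence in $L^r((0,T)\times\O)$ for every $r<5/3$; mollifying (again using $\o_\delta\in L^\infty$) this becomes strong $L^r(\O)$ convergence of the mollified tensor at fixed $\t$. Interpolating the strong $L^r(\O)$ convergence against the uniform $L^3(\O)$ bound yields strong convergence in $L^2(\O)$, and applying $\dive$---continuous from $L^2(\O)$ into $W^{-1,2}(\O)$---gives the claimed convergence $[\dive(\vr_\e\vu_\e\otimes\vu_\e)(\t,\cdot)]_\delta\to[\dive(\vr\vu\otimes\vu)(\t,\cdot)]_\delta$ in $W^{-1,2}(\O;\mathbb R^3)$. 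I expect the bookkeeping of these integrability exponents---recovering $L^3$, and then $L^2$, spatial integrability out of the $L^{5/3}$ space-time bound by trading on the $L^1(0,T;L^3)$ estimate---to be the main obstacle; everything else is the soft weak-convergence-plus-compact-embedding argument above.
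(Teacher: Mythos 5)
Your proof is correct, and its skeleton coincides with the paper's: identify the space--time limits $\vr_\e\vu_\e\rightharpoonup\vr\vu$, $\vr_\e\vf_\e\rightharpoonup\vr\vf$ and (via Lemma \ref{Compactness in time}) $\vr_\e\vu_\e\otimes\vu_\e\to\vr\vu\otimes\vu$, then pass these through the time mollification, which at fixed $\t$ is integration against a fixed kernel. The genuine difference is the mode of convergence you establish. The paper's proof only checks convergence of each mollified quantity against a fixed test function $\bm\phi$ (displays \eqref{2.27} and \eqref{2.34}, and the display preceding \eqref{2.31}), i.e.\ weak-* convergence in $W^{-1,2}(\O;\mathbb R^{3})$, and then asserts the stated norm convergence. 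For the first two terms this upgrade is exactly the routine compactness argument you give (the sequences are bounded in $L^{2}(\O)$ at fixed $\t$, and $L^{2}(\O)\hookrightarrow\hookrightarrow W^{-1,2}(\O)$). For the convective term, however, the upgrade is \emph{not} automatic from what the paper records: the paper only uses the bound in $L^{1}(0,T;L^{2}(\O))$, and weak $L^{2}(\O)$ convergence of the mollified tensor together with its strong $L^{1}(\O)$ convergence does not imply strong $L^{2}(\O)$ convergence (and $\dive$ is continuous but not compact from $L^{2}$ into $W^{-1,2}$). Your extra ingredient --- the tensor is bounded in $L^{1}(0,T;L^{3}(\O))$ because $\|\vu_\e(t)\|_{L^{6}}^{2}\in L^{1}(0,T)$, hence the mollified tensor is bounded in $L^{3}(\O)$ at fixed $\t$, and interpolating its strong low-exponent convergence against this bound yields strong $L^{2}(\O)$ convergence --- is precisely what converts the paper's distributional statement into the claimed $W^{-1,2}$ convergence. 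This matters downstream: in the proof of Theorem \ref{Theorem} the momentum equation is tested with the Stokes solutions $\ww_\e$ of Proposition \ref{Stationary stokes problem}, which converge only weakly in $W_{0}^{1,2}(\O;\mathbb R^{3})$, so the pairing with the convective term genuinely needs the norm convergence you prove, not merely the tested convergence the paper's displays literally yield. A minor stylistic difference: for the first limit you combine strong convergence of $\vr_\e$ with weak convergence of $\vu_\e$, whereas the paper factors $\vr_\e\vu_\e=\sqrt{\vr_\e}\,(\sqrt{\vr_\e}\vu_\e)$ and uses \eqref{2.11} together with Lemma \ref{Compactness in time}; both are valid.
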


\begin{proof}
For $\d_{\t}[\vr_{\e}\vu_{\e}(\t,\cdot)]_{\delta}$, by the weak convergence of $\sqrt{\vr_{\e}}$  in \eqref{2.11} and strong convergence of $\sqrt{\vr_{\e}}\vu_{\e}$ in Lemma \ref{Compactness in time}, we obtain, for any $\bm\phi\in W_{0}^{1,2}(\O;\mathbb R^{3})$,
\ba\label{2.27}
&\int_{\O_{\e}}\d_{\t}[\vr_{\e}\vu_{\e}(\t,x)]_{\delta}\cdot\bm\phi dx=\int_{0}^{T}\int_{\O}\vr_{\e}\vu_{\e}(t,x)\cdot\d_{\t}\o_{\delta}(\t-t)\bm\phi dxdt\\
&\rightarrow\int_{0}^{T}\int_{\O}\vr\vu(t,x)\cdot\d_{\t}\o_{\delta}(\t-t)\bm\phi dxdt=\int_{\O}\d_{\t}[\vr\vu(\t,x)]_{\delta}\cdot\bm\phi dx \quad \mbox{as } \e \rightarrow 0,
\ea
for any $\t\in(\delta,T-\delta)$. Thus we have
\be\label{2.28}
\d_{\t}[\vr_{\e}\vu_{\e}(\t,\cdot)]_{\delta}\rightarrow\d_{\t}[\vr\vu(\t,\cdot)]_{\delta} \quad\mbox{in } W^{-1,2}(\O;\mathbb R^{3}) \mbox{ as } \e \rightarrow 0,
\ee
for any $\t\in(\delta,T-\delta)$.

For $[\vr_{\e}\vf_{\e}(\t,\cdot)]_{\delta}$, since $\{\vr_{\e}\}_{\e>0}$ are uniformly bounded in $L^{\infty}((0,T)\times\O)$ and $\{\vf_{\e}\}_{\e>0}$ are uniformly bounded in $ L^{2}((0,T)\times\O;\mathbb R^{3})$, we have 
\be
\|\vr_{\e}\vf_{\e}\|_{L^{2}((0,T)\times\O)}\leq C.\nn
\ee
Thus, up to a subsequence, we have
\be\label{2.29}
\int_{0}^{T}\int_{\O}\vr_{\e}\vf_{\e}(t,x)\cdot \bm{\tilde\varphi} dxdt\rightarrow\int_{0}^{T}\int_{\O}\overline{\vr\vf}\cdot \bm{\tilde\varphi} dxdt \quad \mbox{as } \e \rightarrow 0,
\ee
for any $\bm{\tilde\varphi}\in{L^{2}((0,T)\times\O;\mathbb R^{3})}$, where $\overline{\vr\vf}$ denotes a weak  limit of ${\vr_{\e}\vf_{\e}}$ in $L^{2}((0,T)\times\O)$. 

Using \eqref{2.10}  and  \eqref{1.13}, we have

\be\label{2.30}
\int_{0}^{T}\int_{\O}\vr_{\e}\vf_{\e}(t,x)\cdot \bm\varphi dxdt\rightarrow\int_{0}^{T}\int_{\O}\vr\vf\cdot \bm\varphi dxdt \quad \mbox{as } \e \rightarrow 0,
\ee
for all $\bm\varphi\in C_{c}^{\infty}((0,T)\times \O;\mathbb R^{3})$. For any $\bm\phi\in W_{0}^{1,2}(\O;\mathbb R^{3})$, we have 
\be
\o_{\delta}(\t-t) \bm\phi\in {L^{2}(0,T;W_{0}^{1,2}(\O;\mathbb R^{3}))}.\nn
\ee
It follows from \eqref{2.29} and \eqref{2.30} that
\be
\int_{0}^{T}\int_{\O}\vr_{\e}\vf_{\e}(t,x)\cdot\o_{\delta}(\t-t)\bm\phi dxdt\rightarrow\int_{0}^{T}\int_{\O}\vr\vf(t,x)\cdot\o_{\delta}(\t-t)\bm\phi dxdt \quad \mbox{as } \e \rightarrow 0,\nn
\ee
which means 
\be\label{2.31}
[\vr_{\e}\vf_{\e}(\t,\cdot)]_{\delta}\rightarrow[\vr \vf(\t,\cdot)]_{\delta} \quad\mbox{in } W^{-1,2}(\O;\mathbb R^{3}) \mbox{ as } \e \rightarrow 0,
\ee
for any $\t\in(\delta,T-\delta)$.

As for $[\vr_{\e}\vu_{\e}\otimes \vu_{\e}(\t,\cdot)]_{\delta}$, by  \eqref{2.4} and \eqref{2.10}, we know that $\{\vr_{\e}\}_{\e>0}$ are uniformly bounded in $L^{\infty}((0,T)\times\O)$ and $\{\vu_{\e}\}_{\e>0}$ are uniformly bounded in $ {L^{2}(0,T;L^{4}(\O;\mathbb R^{3}))}$. Thus 
\be
\|\vr_{\e}\vu_{\e}\otimes \vu_{\e}\|_{L^{1}(0,T;L^{2}(\O))}\leq C.\nn
\ee
 Up to a subsequence, we have
 \be\label{2.32}
\int_{0}^{T}\int_{\O}\vr_{\e}\vu_{\e}\otimes \vu_{\e}(t,x): \nabla\bm{\tilde\varphi} dxdt\rightarrow\int_{0}^{T}\int_{\O}\overline{\vr\vu\otimes \vu}:\nabla \bm{\tilde\varphi} dxdt \quad \mbox{as } \e \rightarrow 0,
\ee
for any $ \bm{\tilde\varphi}\in{L^{\infty}(0,T;W^{1,2}(\O;\mathbb R^{3}))}$, where $\overline{\vr\vu\otimes \vu}$ denotes a weak  limit of $\vr_{\e}\vu_{\e}\otimes \vu_{\e}$ in $L^{1}(0,T;L^{2}(\O;\mathbb R^{3\times 3}))$. By Lemma \ref{Compactness in time}, we have
\ba\label{2.33}
&\int_{0}^{T}\int_{\O}\vr_{\e}\vu_{\e}\otimes \vu_{\e}(t,x):\nabla \bm\varphi dxdt=\int_{0}^{T}\int_{\O}\sqrt{\vr_\e}\vu_\e\otimes \sqrt{\vr_\e}\vu_\e(t,x):\nabla \bm\varphi dxdt\\
&\rightarrow\int_{0}^{T}\int_{\O}\sqrt{\vr}\vu\otimes \sqrt{\vr}\vu(t,x):\nabla \bm\varphi dxdt=\int_{0}^{T}\int_{\O}\vr\vu\otimes \vu:\nabla \bm\varphi dxdt \quad \mbox{as } \e \rightarrow 0,
\ea
for all $\bm\varphi\in C_{c}^{\infty}((0,T)\times \O;\mathbb R^{3})$. For any $\bm\phi\in W^{1,2}(\O;\mathbb R^{3})$, we have
\be
\o_{\delta}(\t-t)\nabla \bm\phi\in {L^{\infty}(0,T;L^{2}(\O;\mathbb R^{3}))},\quad\mbox{for any }\t\in(\delta,T-\delta).\nn
\ee
Then 
\be\label{2.34}
\int_{0}^{T}\int_{\O}\vr_{\e}\vu_{\e}\otimes \vu_{\e}(t,x):\o_{\delta}(\t-t)\nabla \bm\phi dxdt\rightarrow\int_{0}^{T}\int_{\O}{\vr\vu\otimes \vu}(t,x):\o_{\delta}(\t-t)\nabla \bm\phi  dxdt \quad \mbox{as } \e \rightarrow 0,
\ee
for any $\bm\phi\in W^{1,2}(\O;\mathbb R^{3})$, which yields
\be\label{2.35}
[\dive(\vr_{\e}\vu_{\e}\otimes \vu_{\e})(\t,\cdot)]_{\delta}\rightarrow[\dive(\vr\vu\otimes \vu)(\t,\cdot)]_{\delta} \quad\mbox{in } W^{-1,2}(\O;\mathbb R^{3}) \mbox{ as } \e \rightarrow 0,
\ee
for any $\t\in(\delta,T-\delta)$. 
\end{proof}
\smallskip 

Now we are in the position to prove Theorem \ref{Theorem}.
\begin{proof}[Proof of Theorem \ref{Theorem}]
For any $\bm{\tilde\phi}\in W_{0}^{1,2}(\O_{\e};\mathbb R^{3})$ with $\dive \bm{\tilde\phi}=0$,
combining \eqref{2.26}, Propositions \ref{Stationary stokes problem} and \ref{convergence}, we have
\ba\label{2.36}
&\mu\int_{\O}\nabla[\vu(\t,\cdot)]_{\delta}:\nabla \bm{\tilde\phi} dx+\mu\int_{\O}(\mathbb C[\vu(\t,\cdot)]_{\delta})\cdot \bm{\tilde\phi} dx\\
&=\int_{\O}[\vr \vf(\t,\cdot)]_{\delta}\cdot\bm{\tilde\phi} dx+\int_{\O}[\vr\vu\otimes \vu(\t,\cdot)]_{\delta}:\nabla \bm{\tilde\phi} dx-\int_{\O}\d_{t}[\vr\vu(\t,\cdot)]_{\delta}\cdot\bm{\tilde\phi} dx
\ea
for any $\t\in(\delta,T-\delta)$. For any fixed $\delta_{1}>0$ small, any $\delta\in(0,\delta_{1})$, and any $\psi\in C_{c}^{\infty}([0,T))$, we have
\ba\label{2.37}
&\mu\int_{\delta_{1}}^{T-\delta_{1}}\int_{\O}\nabla[\vu(t,x)]_{\delta}:\nabla \bm{\tilde\phi}(x)\psi(t) dxdt+\mu\int_{\delta_{1}}^{T-\delta_{1}}\int_{\O}(\mathbb C[\vu(t,x)]_{\delta})\cdot \bm{\tilde\phi}(x)\psi(t) dxdt \\
&=\int_{\delta_{1}}^{T-\delta_{1}}\int_{\O}[\vr \vf(t,x)]_{\delta}\cdot\bm{\tilde\phi}(x)\psi(t) dxdt+\int_{\delta_{1}}^{T-\delta_{1}}\int_{\O}[\vr\vu\otimes \vu(t,x)]_{\delta}:\nabla \bm{\tilde\phi}(x) \psi(t)dxdt\\
&-\int_{\delta_{1}}^{T-\delta_{1}}\int_{\O}\d_{t}[\vr\vu(t,x)]_{\delta}\cdot\bm{\tilde\phi}(x)\psi(t) dxdt.
\ea

 Since functions of the form
\be
\sum_{|k|\leq N} \bm\phi_k(x)\psi_k(t), \quad \text{with } \bm\phi_k\in C_{c}^{\infty}(\Omega;\mathbb{R}^3), \quad\dive \bm\phi_k=0,\quad \psi_k\in C_{c}^{\infty}([0,T)),\nn
\ee
are dense in 
\be\label{2.38}
A=\{\bm\varphi\in C_{c}^{1}([0,T)\times \O;\mathbb R^{3})|\ \dive \bm\varphi=0\},
\ee
 \eqref{2.37} holds with $\bm{\tilde\phi}(x)\psi(t)$ replaced by $\bm\varphi(t,x)$ and $\nabla \bm{\tilde\phi}(x)\psi(t)$ replaced by $\nabla\bm\varphi(t,x)$  for any $\bm\varphi\in A$.
Letting $\delta\rightarrow 0$, we have
\ba\label{2.39}
&\int_{\delta_{1}}^{T-\delta_{1}}\int_{\O}(-\vr\vu\cdot\d_{t}\bm\varphi-\vr\vu\otimes \vu:\nabla \bm\varphi+\mu\nabla\vu:\nabla \bm\varphi+\mu\mathbb C\vu\cdot \bm\varphi) dxdt\\
&=\int_{\delta_{1}}^{T-\delta_{1}}\int_{\O}\vr \vf\cdot \bm\varphi dxdt+\int_{\O}\vr(\delta_1,x)\vu(\delta_1,x)\cdot\bm\varphi(\delta_1,x) dx-\int_{\O}\vr(T-\delta_1,x)\vu(T-\delta_1,x)\cdot\bm\varphi(T-\delta_1,x) dx
\ea
for any test function 
\be
\bm\varphi\in C_{c}^{1}([0,T)\times \O;\mathbb R^{3}),\quad \dive \bm\varphi=0.\nn
\ee
By \eqref{2.18}, up to a subsequence, $F_{\e}(t):=[t\mapsto\int_{\O}\vr_{\e} \vu_{\e}\cdot \ww_{\e} dx]$ uniformly converge to  $F(t):=[t\mapsto\int_{\O}\vr\vu\cdot \ww dx]$, so we have
\ba
&\lim_{t\rightarrow 0^{+}}\int_{\O}\vr(t,x)\vu(t,x)\cdot\bm\ww(x) dx=\lim_{t\rightarrow 0^{+}}\lim_{\e\rightarrow 0}\int_{\O}\vr_{\e}(t,x)\vu_{\e}(t,x)\cdot\bm\ww_{\e}(x) dx\\
&=\lim_{\e\rightarrow 0}\lim_{t\rightarrow 0^{+}}\int_{\O}\vr_{\e}(t,x)\vu_{\e}(t,x)\cdot\bm\ww_{\e}(x) dx=\lim_{\e\rightarrow 0}\int_{\O}\vr_{\e}^{0}(x)\vu_{\e}^{0}(x)\cdot\bm\ww_{\e}(x) dx\\
&=\int_{\O}\vr_0(x)\vu_0(x)\cdot\ww(x) dx,\quad\text{for any } \ww\in C_c^\infty(\O),\quad\dive \ww=0.\nn
\ea
Thus
\ba
&\lim_{t\rightarrow 0}\int_{\O}\vr(t,x)\vu(t,x)\cdot\bm\phi(x)\psi(t) dx\\
&=\lim_{t\rightarrow 0}\int_{\O}\vr(t,x)\vu(t,x)\cdot\bm\phi(x)\psi(0) dx+\lim_{t\rightarrow 0}\int_{\O}\vr(t,x)\vu(t,x)\cdot\bm\phi(x)(\psi(t)-\psi(0)) dx\\
&=\int_{\O}\vr_0(x)\vu_0(x)\cdot\bm\phi(x)\psi(0) dx,\quad\text{for any } \bm\phi\in C_c^\infty(\O),~~\dive \bm\phi=0, \quad \psi\in C_c^\infty([0,T)).\nn
\ea
Again by the property that the functions of the form
\be
\sum_{|k|\leq N}\bm\phi_k(x)\psi_k(t), \quad \text{with } \bm\phi_k\in C_{c}^{\infty}(\Omega;\mathbb{R}^3), \quad\dive \bm\phi_{k}=0,\quad \psi_k\in C_{c}^{\infty}([0,T)) ,\nn
\ee
are dense in $A$, we have
\ba\label{2.40}
&\lim_{t\rightarrow 0^{+}}\int_{\O}\vr(t,x)\vu(t,x)\cdot\bm\varphi(t,x) dx=\int_{\O}\vr_0(x)\vu_0(x)\cdot\bm\varphi(0,x) dx,
\ea
for any $\bm\varphi\in C_{c}^{1}([0,T)\times \O;\mathbb R^{3})$, $\dive \bm\varphi=0$. Similarly, we have
\ba\label{2.41}
&\lim_{t\rightarrow T^{-}}\int_{\O}\vr(t,x)\vu(t,x)\cdot\bm\varphi(t,x) dx=\int_{\O}\vr(T,x)\vu(T,x)\cdot\bm\varphi(T,x) dx=0,
\ea
for any $\bm\varphi\in C_{c}^{1}([0,T)\times \O;\mathbb R^{3})$, $\dive \bm\varphi=0$. 

Letting $\delta_{1}\rightarrow 0$ in \eqref{2.39}, combining \eqref{2.40} and \eqref{2.41}, we obtain
\be\label{2.42}
\int_{0}^{T}\int_{\O}(-\vr\vu\cdot\d_{t}\bm\varphi-\vr\vu\otimes \vu:\nabla \bm\varphi+\mu\nabla\vu:\nabla \bm\varphi+\mu\mathbb C\vu\cdot \bm\varphi) dxdt=\int_{0}^{T}\int_{\O}\vr \vf\cdot \bm\varphi dxdt+\int_{\O}\vr_{0}\vu_{0}\cdot\bm\varphi(0,\cdot) dx
\ee
for any function 
\be
\bm\varphi\in C_{c}^{1}([0,T)\times \O;\mathbb R^{3}),\quad \dive \bm\varphi=0.\nn
\ee
\eqref{2.8} and \eqref{2.42} show that the limit $(\vr,\vu)$ is a weak solution to problem \eqref{1.16}. 

Combining \eqref{2.4}, \eqref{2.8}-\eqref{2.10} and \eqref{2.42}, we complete the proof of Theorem \ref{Theorem}.
\end{proof}

\noindent
{\bf Acknowledgements.} The author is partially supported by the NSF of China under Grant 12171235. The author is grateful to professor Yong Lu and professor Milan Pokorn\'y for fruitful discussions. \\
{\bf Conflict of interest} The author declares no conflict of interest in this paper.


\begin{thebibliography}{000}

\bibitem{ALL-NS1}  Allaire, G.: \newblock Homogenization of the Navier-Stokes equations in open sets perforated with tiny holes. I. Abstract framework, a volume distribution of holes. Arch. Ration. Mech. Anal. {\bf 113}(3), 209-259 (1991)

\bibitem{ALL-NS2} Allaire, G.: \newblock Homogenization of the Navier-Stokes equations in open sets perforated with tiny holes. II. Noncritical sizes of the holes for a volume distribution and a surface distribution of holes. Arch. Ration. Mech. Anal. {\bf 113}(3), 261-298 (1991)

\bibitem{BO1}Bella, P., Oschmann, F.: Homogenization and low Mach number limit of compressible Navier-Stokes
equations in critically perforated domains. J. Math. Fluid Mech. {\bf 24}(3), 1-11 (2022)

\bibitem{BO2} Bella, P., Oschmann, F.: Inverse of divergence and homogenization of compressible Navier-Stokes equations in randomly perforated domains. Arch. Ration. Mech. Anal. {\bf 247}(2), 14 (2023)

\bibitem{DiP-L} DiPerna, R. J.,  Lions, P. L.: Ordinary differential equations, transport theory and Sobolev
spaces. Invent. Math. {\bf 98}(3), 511-547 (1989)

\bibitem{DFL}  Diening, L., Feireisl, E.,  Lu, Y.: The inverse of the divergence operator on perforated domains with applications to homogenization problems for the compressible Navier-Stokes system. ESAIM: Control Optim. Calc. Var. {\bf 23}(3), 851-868 (2017)

\bibitem{FL1} Feireisl, E., Lu, Y.: Homogenization of stationary Navier-Stokes equations in domains with tiny holes. J. Math. Fluid Mech. {\bf 17}(2), 381-392 (2015)

\bibitem{FLS} Feireisl, E., Lu, Y., Sun, Y.: Homogenization of a non-homogeneous heat conducting fluid. Asymptotic Anal. {\bf 125}, 327-346 (2020)

\bibitem{FeNaNe} Feireisl, E.,  Namlyeyeva, Y.,  Ne\v casov\' a, \v S.: Homogenization of the evolutionary Navier-Stokes system. Manusc. Math. {\bf 149}, 251-274 (2016)

\bibitem{FNT-Hom} Feireisl, E.,  Novotn\'y, A., Takahashi, T.:  Homogenization and singular limits for the complete Navier-Stokes-Fourier system. J. Math. Pures Appl. {\bf 94}(1), 33-57 (2010)

\bibitem{Richard M. Hofer}H\"{o}fer, R. M.: Homogenization of the Navier-Stokes equations in perforated domains in the inviscid limit. arXiv:2209.06075

\bibitem{Richard M. Hofer1}Höfer, R. M., Kowalczyk, K., Schwarzacher, S.: Darcy’s law as low Mach and homogenization limit of a compressible fluid in perforated domains. Math. Mod. Meth. Appl. S. {\bf 31}(9), 1787-1819 (2021)

\bibitem{Wenjia Jing} Jing, W.: A unified homogenization approach for the Dirichlet problem in perforated domains. SIAM J. Math. Anal.  {\bf 52}(2), 1192-1220 (2020)

\bibitem{Wenjia Jing1} Jing, W.: Layer potentials for Lamé systems and homogenization of perforated elastic medium with clamped holes. Calc. Var. Partial Differ. Equ. {\bf 60}(1), 2 (2021)

\bibitem{Lions-C} Lions, P. L.: Mathematical topics in fluid dynamics, Vol.1, Compressible models. Oxford Science Publication (1996)

\bibitem{Lions-D} Lions, P. L.: Mathematical topics in fluid dynamics, Vol.2, Compressible models. Oxford Science Publication (1998)

\bibitem{L} Lu, Y.: Homogenization of Stokes Equations in Perforated Domains: A Unified Approach. J. Math. Fluid Mech.  {\bf 22}, 44 (2020)
 
 \bibitem{Lu-Oschmann} Lu, Y., Oschmann, F.: Qualitative/quantitative homogenization of some non-Newtonian flows in perforated domains. arXiv:2406.17406

\bibitem{Lu-Pokorny} Lu, Y.,  Pokorn\' y, M.: Homogenization of stationary Navier-Stokes-Fourier system in domains with tiny holes. J. Differ. Equ. {\bf 278}(5), 463-492 (2021)
 
\bibitem{LY-Qian} Lu, Y., Qian, Z.: Homogenization of some evolutionary non-Newtonian flows in porous media. arXiv:2310.05121
   
\bibitem{Lu-Schwarz18} Lu, Y., Schwarzacher, S.: Homogenization of the compressible Navier-Stokes equations in domains with very tiny holes. J. Differ. Equ. {\bf 265}(4), 1371-1406 (2018)

\bibitem{LY} Lu, Y., Yang, P.: Homogenization of Evolutionary Incompressible Navier-Stokes System in Perforated Domains. J. Math. Fluid Mech. {\bf 25}(1), 1-20 (2023)
 
\bibitem{Mas-Hom} Masmoudi, N.: Homogenization of the compressible Navier-Stokes equations in a porous medium. ESAIM: Control Optim. Calc. Var. {\bf 8}, 885-906 (2002)

\bibitem{Mik} Mikeli\'{c}, A.: Homogenization of nonstationary Navier-Stokes equations in a domain with a grained boundary. Ann. Mat. Pura Appl. {\bf 158}(1), 167-179 (1991)

\bibitem {NF}Ne\v casov\' a, \v S., Oschmann, F.: Homogenization of the two-dimensional evolutionary compressible Navier-Stokes equations. Calc. Var. Partial Differ. Equ. {\bf 62} (6), 184 (2023)

\bibitem {NP}Ne\v casov\' a, \v S, Pan, J.:  Homogenization problems for the compressible Navier–Stokes system in 2D perforated domains. Math. Methods Appl. Sci. {\bf 45}(12), 7859-7873 (2022)

\bibitem{Shen} Shen, Z.: Sharp convergence rates for Darcy’s law. Commun. Part. Diff. Eq. {\bf 47}(6), 1098-1123 (2022)

\bibitem{Tartar1} Tartar, L.: Incompressible fluid flow in a porous medium: convergence of the homogenization process, in {Nonhomogeneous media and vibration theory,} edited by E. S\'anchez-Palencia. 368-377  (1980)

\end{thebibliography}
\end{document}